\theoremstyle{plain}
\newtheorem{lemma}{Lemma}[section]
\newtheorem{prop}[lemma]{Proposition}
\newtheorem{coro}[lemma]{Corollary}
\newtheorem{thm}[lemma]{Theorem}
\theoremstyle{definition}
\newtheorem{definition}[lemma]{Definition}
\newtheorem{remark}[lemma]{Remark}
\newcommand{\ts}{\hspace{0.5pt}}
\newcommand{\R}{\mathbb{R}\ts}
\newcommand{\IP}{\mathbb{P}}
\newcommand{\IHH}{\mathscr{H}}
\newcommand{\IAA}{\mathscr{A}}
\newcommand{\aV}[1]{\left\Vert #1\right\Vert}
\newcommand{\as}[1]{\langle #1\rangle}
\newcommand{\ow}[1]{\widetilde{#1}}
\newcommand{\Hmm}[1]{\leavevmode{\marginpar{\tiny%
$\hbox to 0mm{\hspace*{-0.5mm}$\leftarrow$\hss}%
\vcenter{\vrule depth 0.1mm height 0.1mm width \the\marginparwidth}%
\hbox to 0mm{\hss$\rightarrow$\hspace*{-0.5mm}}$\\\relax\raggedright #1}}}
\newcommand{\IC}{\mathbb{C}}
\newcommand{\IR}{\mathbb{R}}
\newcommand{\ILL}{\mathscr{L}}
\newcommand{\IN}{\mathbb{N}}
\newcommand{\IZ}{\mathbb{Z}}
\newcommand{\IX}{\mathbb{X}}
\newcommand{\dom}{\mathrm{dom}}
\newcommand{\f}{\frac}
\newcommand{\nn}{\nonumber}
\begin{document}

\title[Feynman path integrals on discrete graphs ]{Feynman path integrals for magnetic Schrödinger operators on infinite weighted graphs}

\author[B. G\"uneysu]{Batu G\"uneysu}
\address{Batu G\"uneysu, Institut f\"ur Mathematik, Humboldt-Universit\"at zu Berlin, 12489 Berlin, Germany} \email{gueneysu@math.hu-berlin.de}
\author[M. Keller]{Matthias Keller}
\address{M.~Keller:  Institut f\"ur Mathematik, Universit\"at Potsdam, 14476  Potsdam, Germany}
	\email{matthias.keller@uni-potsdam.de}

\begin{abstract} We prove a Feynman path integral formula for the \emph{unitary group} $ \exp(-itL_{v,\theta})$, $t\geq 0$, associated with a discrete magnetic Schrödinger operator $L_{v,\theta}$ on a large class of weighted infinite graphs. As a consequence, we get a new Kato-Simon estimate 
$$
|\exp(-itL_{v,\theta})(x,y)|\leq \exp(-tL_{-\mathrm{deg},0})(x,y),
$$
which controls the unitary group uniformly in the potentials in terms of a Schrödinger semigroup, where the potential $\mathrm{deg}$ is the weighted degree function of the graph.
\end{abstract}
\date{\today} %
\maketitle

\section{Introduction}

While the Schrödinger \emph{semigroup} $\exp(-tH_{v,\theta})$, $t\geq 0$, associated to an electric potential $v$ and a magnetic potential $\theta$ on the Euclidean $\IR^d$ or a general Riemannian manifold is given by a well-defined Brownian motion path integral formula \cite{simon,batu}, the Feynman-Kac-Ito formula, it is well-known that there cannot hold an analogous formula for the unitary Schrödinger group $\exp(-itH_{v,\theta})$, $t\geq 0$. For example, it can be proven \cite{RS2} that there cannot exist a complex measure $\mu$ on the space of continuous paths $[0,\infty)\to \IR^m$ such that the finite dimensional distributions of $\mu$ are given by the integral kernel $\exp(-itH_{0,0})(x,y) $ of $\exp(-itH_{0,0})$, showing that there cannot even exist\footnote{For the sake of completeness we remark that there exist several well-defined substitute results that \lq\lq{}mimick\rq\rq{} a path integral for $\exp(-itH_{v,\theta})$. For example, one can use white noise analysis \cite{hida}, an infinite dimensional distribution theory to derive such a formula in the Euclidean $\IR^d$ case, at least under some strong assumptions on the potentials.} a path integral formula in the literal sense for the unitary group of the Laplace operator $H_{0,0}=-\Delta$ in $\IR^d$. On the other hand, it is expected from some simple heuristics \cite{RS2} that the divergences of the Feynman path integral for $\exp(-itH_{v,\theta})$ actually stem from local singularities, so that in principle one can expect a well-defined Feynman path integral formula to hold true if one replaces the Riemannian manifold with an infinite weighted graph and considers the corresponding discrete magnetic Schrödinger operators thereon. \emph{The main result of this paper shows that indeed such a path integral integral formula holds true in a very general setting.} \vspace{2mm}

To this end, given a weighted graph $(X,b,m)$, possibly non-locally finite, a magnetic potential $\theta:\{b>0\}\to \IR$ and an electric potential $v:X\to\IR$, we use quadratic form methods to define a natural self-adjoint realization $L_{v,\theta}$ of the formal magnetic Schrödinger operator
$$
\ow{L}_{v,\theta}f(x) = \frac{1}{m(x)}\sum_{y \in X} b(x,y)\big (f(x)-\exp(i\theta(x,y) )f(y)\big)+v(x)f(x)
$$
in the complex Hilbert space $\ell^2(X,m)$. Operators of this type appear naturally in a gauge theoretic discretization procedure for continuum magnetic Schrödinger operators (cf. Remark \ref{pamy} below), and have been used in the tight binding approximation in solid state physics \cite{Ha}. The most prominent example of such an operator is certainly the Harper operator \cite{Ha}, whose spectral theory has been subject to the famous \lq\lq{}ten Martini problem\rq\rq{} \cite{martini}.\\
Our main result, Theorem \ref{main}, is the following Feynman path integral formula for the integral kernel $\exp(-itL_{v,\theta})(x,y)$ of $\exp(-itL_{v,\theta})$: defining a random variable
\begin{align*}
\IAA_t(v,\theta|\mathbb{X}):=i\int_0^t \theta( d \mathbb{X}_s)-i\int_{0}^{t}(v(\mathbb{X}_s)+\mathrm{deg}(\mathbb{X}_s))ds+\int^t_0\mathrm{deg}(\mathbb{X}_s)ds:\Omega\longrightarrow \IC
\end{align*}
on the space $\Omega$ of explosive $X$-valued right-continuous jump paths, one has

\begin{align}\label{fki22}
	\exp(-itL_{v,\theta})(x,y)=\frac{1}{m(y)} \int_{\{\mathbb{X}_t=y\}\cap\{N_t(\mathbb{X})<\infty\}}i^{N_t(\mathbb{X})} \exp(\IAA_t(v,\theta|\mathbb{X})) d\mathbb{P}_x,
\end{align}
where
\begin{itemize}
\item $\mathbb{P}_x$, $x\in X$, denotes the Markov family of probability measures on $\Omega$ which is induced by $L_{v,\theta}|_{v=0,\theta=0}$ by the theory of regular Dirichlet forms
\item $\mathbb{X}_t(\omega):=\omega(t)$ is the coordinate process on $\Omega$
\item $N_t(\mathbb{X})\in \IN\cup \{\infty\}$ is the number of jumps of $\mathbb{X}$ until the the time $t$
\item $\int_0^t \theta( d \mathbb{X}_s):\Omega\to\IR$ is the line integral of $\theta$ along the paths of $\mathbb{X}$
\item $\mathrm{deg}:X\to [0,\infty)$ the weighted degree function on $(X,b,m)$.
\end{itemize}

To the best of our knowledge this formula is even conceptually entirely new, in the sense that the only previously established case was $v=0$, $\theta =0$ on the unweighted standard lattice in $\IZ^d$ (cf. \cite{carmona}). The assumptions of Theorem \ref{main} are satisfied, if e.g. the electric potential $v$ is bounded from below and $\mathrm{deg}$ is bounded, noting that, however, Theorem~\ref{main} can deal with much more general situations than the latter. We expect the Feynman path integral formula (\ref{fki22}) to have several important spectral theoretic and geometric consequences: For example, by comparing (\ref{fki22}) with the usual Feynman-Kac formula 
$$
\exp(-tL_{-\mathrm{deg},0})(x,y)=\frac{1}{m(y)} \int_{\{\mathbb{X}_t=y\}\cap\{N_t(\mathbb{X})<\infty\}}  \exp\Big(\int^t_0\mathrm{deg}(\mathbb{X}_s)\Big) d\mathbb{P}_x,
$$
for the Schrödinger semigroup $\exp(-tL_{-\mathrm{deg},0})$, $t\geq 0$, one immediately gets the Kato-Simon type inequality 
$$
|\exp(-itL_{v,\theta})(x,y)|\leq \exp(-tL_{-\mathrm{deg},0})(x,y),
$$
which controls the underlying unitary magnetic Schrödinger group uniformly in both potentials in terms of the geometry of $(X,b,m)$. The latter inequality is expected to be of a fundamental importance in the context of discrete Kato-Strichartz estimates on general weighted graphs (cf. \cite{jacob} for a very recent study of such estimates for the unweighted standard lattice in $\IZ^d$). Another interesting direction could be dictated by the following observation: Given another magnetic potential $\theta\rq{}:\{b>0\}\to \IR$ and another electric potential $v\rq{}:X\to \IR$ it is straightforward to derive the following explicit Feynman path integral formula for the composition $\exp(-itL_{v,\theta})\exp(itL_{v\rq{},\theta\rq{}})$

\begin{align*}
&\big[\exp(-itL_{v,\theta})\exp(itL_{v',\theta'}) \big](x,y)\\
&=\int_{ \{ \omega:N_t(\mathbb{X}(\omega))<\infty\}}i^{N_t(\mathbb{X}(\omega))} \exp\big(\IAA_t(v,\theta|\mathbb{X}(\omega))\big) m(\omega(t))^{-1}\Psi_t(v\rq{},\theta\rq{},y,\omega) d\IP^x(\omega),
\end{align*} 
where the random variable $\Psi_t(v\rq{},\theta\rq{},y,\cdot):\Omega\to\IC $ is given by
$$
\Psi_t(v\rq{},\theta\rq{},y,\omega ):=\int_{\{\omega\rq{}:\omega'(t)=\omega(t)\}\cap\{\omega\rq{}:N_t(\mathbb{X}(\omega\rq{}))<\infty\}}\overline{i^{N_t(\mathbb{X}')} \exp\big(\IAA_t(v',\theta'|\mathbb{X}(\omega\rq{}))\big)}d\IP^y(\omega\rq{}).
$$
 We believe that this result, which is again conceptually completely new, will turn out to be very useful in the context of scattering theory (cf. \cite{jacob} for scattering theory results on the unweighted standard lattice in $\IZ^d$).

\section{Main results}

\subsection{Weighted graphs}\label{s:graphs} Let $b$ be a graph over the countable set $ X $, i.e., 
$$
 b:X\times X \longrightarrow  [0,\infty) \text{ is symmetric with  $b(x,x)=0$ and $\sum_{y \in X}b(x,y) < \infty$ for all $x \in X$.}
$$

Then, the elements of $X$ are called the \emph{vertices} of $(X,b)$ and all $(x,y) \in X\times X$ with $b(x,y) >0$ are called the \emph{edges} of $(X,b)$, where given $x\in X$ every $y\in X$ with $b(x,y)>0$ is called a \emph{neighbor of $x$} and we write $ x\sim y $. The graph $(X,b)$ is called \emph{locally finite}, if every vertex has only a finite number of neighbors. Furthermore, a \emph{path} on the graph $(X,b)$ is a (finite or infinite) sequence of pairwise distinct vertices $(x_{j})$ such that $x_j \sim x_{j+1}$ for all $j$, and $X$ is called \emph{connected}, if for any $x,y\in X$ there is a finite path $(x_j)^n_{j=0}$ such that $x_0=x$ and $x_n=y$. We equip $X$ with the discrete topology, so that any function $m: X \to (0,\infty)$ gives rise to a Radon measure of full support on $X$ by setting $m(A) := \sum_{x \in A}m(x)$. Then, we say $ b $ is a graph over the measure space $(X,m)$ and call the triple $(X,b,m)$ a \emph{weighted graph}. For $x\in X$, we denote the \emph{weighted degree function} by
$$
\text{deg}(x):= \frac{1}{m(x)}\sum_{y \in X} b(x,y).
$$

\subsection{Self-adjoint realizations of magnetic Schrödinger operators}

In the following, we understand all spaces of functions to be complex-valued, and $i:=\sqrt{-1}$. Let ${C}(X)$ be the linear space of functions on $X$ and ${C}_c(X)$ its subspace of functions with finite support. We denote the standard scalar product and norm on $\ell^2(X,m)$ with $\as{\cdot,\cdot}$ and $\aV{\cdot}$. A \emph{magnetic potential} on $(X,b)$ is an antisymmetric function 
$$
\theta: \{b>0\}\longrightarrow \IR\quad\mbox{such that}\quad\theta(x,y) = - \theta(y,x),\;x,y\in X.
$$ 
Any function $v: X \to \R$ will be simply called a \emph{electric potential} on $X$.
\medskip

We define a symmetric densely defined sesqui-linear form in the complex Hilbert space $\ell^2(X,m)$ with domain of definition ${C}_c(X)$ by
\begin{align}\label{dert}
Q_{v,\theta}^{(c)}(f,g):= & \frac{1}{2}\sum_{x\sim y} b(x,y)\Big(f(x)-\exp(i \theta(x,y) )f(y)\Big)\overline{\Big(g(x)-\exp(i \theta(x,y)) g(y)\Big)}\\
&\quad+\sum_{x\in X}v(x)f(x)\overline{g(x)}m(x)\nn.
\end{align}

\begin{remark}\label{pamy} Although not obvious, the above definition of $Q_{v,\theta}^{(c)}$ actually reflects a natural discretization procedure. To see that, one has to take the $U(1)$ gauge theory behind magnetic Schrödinger operators in $\IR^d$ into account: Assume $\tilde{\theta}$ is a $C^1$ and real-valued $1$-form on $\IR^d$ (= a magnetic potential) and $\tilde{v}:\IR^d\to \IR$ is continuous (= an electric potential). Then $\tilde{\theta}$ induces the metric covariant derivative $\nabla^{\tilde{\theta}}:= d+i\tilde{\theta}$ on the trivial complex line bundle $\IR^d\times \IC\to \IR^d$ over $\IR^d$, and one can define a symmetric sesquilinear form in $L^2(\IR^d)$ by
\begin{align}\label{pawy}
Q^{(c)}_{\tilde{v},\tilde{\theta}}= \frac{1}{2}\int \sum^{d}_{j=1}\nabla^{\tilde{\theta}}_{\partial_j}f\cdot\overline{\nabla^{\tilde{\theta}}_{\partial_j}g } \ dx+\int \tilde{v}f \overline{g} \ dx, \quad f,g\in C^{\infty}_c(\IR^d).
\end{align}
The starting point for a discretization of the above sesquilinear form is simply to drop the limit and to set $\delta=1$ in the formula (cf.  formula (7.66) in \cite{thalmaier})
$$
\nabla^{\tilde{\theta}}_{\pm\partial_j}f(x)=\lim_{\delta\to 0}\f{1}{\delta}\exp\Big(i\int_{\gamma^{\delta}_{x,x\pm e_j}}\tilde{\theta}\Big)f(x\pm e_j)-\f{1}{\delta}f(x), \quad f \in C^{\infty}(\IR^d), 
$$
where 
$$
\gamma^{\delta}_{x,x\pm e_j}:[0,\delta]\longrightarrow \IR^d, \quad \gamma^{\delta}_{x,x\pm e_j}(t):=\f{1}{\delta}(\delta-t)x+\f{t}{\delta}(x\pm e_j)
$$
is the straight line which starts from $x$ and ends in $x\pm e_j$ at the time $\delta>0$. Note that $\exp\left(i\int_{\gamma^{\delta}_{x,x\pm e_j}}\tilde{\theta}\right)$ is precisely the (inverse) parallel transport along $\gamma^{\delta}_{x,x\pm e_j}$ with respect to the $U(1)$ covariant derivative $\nabla^{\tilde{\theta}}$. Then $\theta(x,y):=\int_{\gamma^1_{x,y}}\tilde{\theta}$ defines a magnetic potential on $\IZ^d$ with its standard unweighted graph structure $b_{\IZ^d}(x,y)=1$ if $|x-y|_{\IR^d}=1$, and $b_{\IZ^d}(x,y)= 0$ else. Note that $b_{\IZ^d}(x,y)>0$ if and only of $y$ is of the form $x\pm e_j$. With
\begin{align*}
 &\nabla^{\theta} f(x,y):= \exp\left(i\theta(x,y)\right)f(y)-f(x), \quad (x,y)\in \{b_{\IZ^d}>0\},\\
&  v(x):=\tilde{v}(x), \quad x\in \IZ^d,
\end{align*}
we arrive at the sesquilinear form
\begin{align*}
Q_{v,\theta}^{(c)}(f,g):=&\frac{1}{2}\sum_{x\in X}\sum_{y:|x-y|_{\IR^d}=1} \nabla^{\theta} f(x,y) \overline{\nabla^{\theta} g(x,y)}+\sum_{x\in X}v(x)f(x)\overline{g(x)}\\
&= \frac{1}{2}\sum_{  |x-y|_{\IR^d}=1}  \Big(f(x)-\exp(i \theta(x,y) )f(y)\Big)\overline{\Big(g(x)-\exp(i \theta(x,y)) g(y)\Big)}\\
&\quad+\sum_{x\in X}v(x)f(x)\overline{g(x)}
\end{align*}
in $\ell^2(\IZ^d)$ which is precisely of the type (\ref{dert}) for $X=\IZ^d$, $b=b_{\IZ^d}$, $m\equiv 1$, and in addition formally of the type (\ref{pawy}). The above discretization procedure could be summarized as follows: one replaces the covariant derivative (an infinitesimal object) by its parallel transport. Field theoretic variants of this procedure are standard in lattice gauge theory\footnote{The authors would like to thank Burkhard Eden and Matthias Staudacher in this context.}.
\end{remark}
\bigskip

After discussing how the form $ Q^{(c)}_{v,\theta} $ arises from a discretization procedure, we continue by introducing the associated formal operator. Let
$$
\widetilde{C}(X):= \Big\{f\in {C}(X):\sum_{y\in X} b(x,y)|f(y)| < \infty \text{ for all }x\in X\Big\},
$$
and we define the formal difference operator $\ow{L}_{v,\theta}:\widetilde{C}(X) \to {C}(X)$ by
$$
\ow{L}_{v,\theta}f(x) = \frac{1}{m(x)}\sum_{y \in X} b(x,y)\big (f(x)-\exp(i\theta(x,y) )f(y)\big)+v(x)f(x).
$$
The form $Q_{v,\theta}^{(c)}$ and the operator $\ow L_{v,\theta}$ are related by Green's formula: for all $f\in \widetilde{C}(X)$, $g \in {C}_c(X)$, one has
\begin{align*}
\sum_{x\in X}&\ow{L}_{v,\theta}f(x)\overline{g(x)}m(x) = \sum_{x\in X} f(x)\overline{\ow{L}_{v,\theta}g(x)}m(x)\\
&=\frac{1}{2}\sum_{x,y \in X} b(x,y) \Big(f(x)-\exp(i \theta(x,y) )f(y)\Big)\overline{\Big(g(x)-\exp(i \theta(x,y))g(y)\Big)} \\
&\quad+ \sum_{x\in X}v(x) f(x)\overline{g(x)} m(x).\nn
\end{align*}
Moreover, if $\ow L_{v,\theta}[{C}_c(X)]\subseteq \ell^{2}(X,m)$, then for all $f,g\in C_{c}(X)$ one has
\begin{align*}
    Q^{(c)}_{v,\theta}(f,g)=\langle \ow L_{v,\theta} f, g\rangle=\langle f,\ow L_{v,\theta} g\rangle.
\end{align*}

If $Q^{(c)}_{v,\theta}$ is bounded from below and closable, we denote its closure by $Q_{v,\theta}$ and the corresponding self-adjoint operator by $L_{v,\theta}$, referred to as the \emph{magnetic Schrödinger operator induced by $(\theta,v)$}. From the Green's formula it is obvious that $ L_{v,\theta} $ is a restriction of $ \ow L_{v,\theta}  $, i.e.,
\begin{align*}
 L_{v,\theta} =\ow L_{v,\theta} \mbox{ on } \dom(L_{v,\theta} ).
\end{align*}  Likewise, the strongly continuous unitary group of operators
\begin{align*}
\exp(-itL_{v,\theta})\in \ILL(\ell^2(X,m)),  \quad {t\in\IR},
\end{align*}
defined by the spectral calculus, is called \emph{magnetic Schrödinger group} induced by $\theta$ and $v$. The importance of this group for quantum mechanics is that for every $\psi\in \dom(L_{v,\theta})$ the function $t\mapsto \psi(t):=\exp(-itL_{v,\theta})\psi$ is the unique strong $C^1$-map $\IR\to \ell^2(X,m)$ which satisfies the Schrödinger equation
$$
(d/dt) \psi (t) = -iL_{v,\theta}\psi(t),\quad \psi(0)=\psi. 
$$

The following remark (cf. Lemma 2.3, Lemma 2.11 and Theorem 2.12 in \cite{GKS}) addresses some functional analytic subtleties of these operators:

\begin{remark} 1. If $X$ is locally finite, then one has $\widetilde{C}(X)={C}(X)$, however, in general, $\widetilde{C}(X)$ does not include $\ell^{2}(X,m)$.\\
2. The condition $\ow L_{v,\theta}[{C}_c(X)]\subseteq \ell^{2}(X,m)$ for some (or equivalently all) $( v,\theta)$ is equivalent to
$$
\sum_{y\in X}\frac{ b(x,y)^2}{m(y)}<\infty\quad\text{ for all $x\in X$}.
$$
We refer the reader to \cite{GKS, Mi1,Mi2} for essential self-adjointness results under the assumption $\ow L_{v,\theta}[{C}_c(X)]\subseteq \ell^{2}(X,m)$.\\
3. If  $Q^{(c)}_{v,0}$ is bounded from below then $Q^{(c)}_{v,\theta}$ is automatically closable for all magnetic potentials $ \theta $. If  $\ow L_{v,\theta}[{C}_c(X)]\subseteq \ell^{2}(X,m)$ then $\ow L_{v,\theta}$ is a symmetric operator on $ C_{c}(X)\subseteq \ell^{2}(X,m) $ and, hence, if $ Q_{v,\theta}^{(c)} $ is bounded below, then  $ Q_{v,\theta}^{(c)} $ is closable and $ L_{v,\theta} $ is a restriction of $\ow L_{v,\theta}$.   
\end{remark} 




\section{Stochastic processes on weighted graphs}\label{proc}

Given a Hausdorff space $Y$ we denote its Alexandrov compactification by $\hat{Y}=Y\cup \{\infty_Y\}$ if $Y$ is noncompact and locally compact and $\hat{Y}=Y$ if $Y$ is compact. Let us introduce the probabilistic framework: Let us denote with $\Omega$ the space of right-continuous paths $\omega:[0,\infty)\to \hat{X}$ having left limits, which is equipped with its Borel-sigma-algebra $\mathcal{F}$. The latter is filtered by the filtration $\mathcal{F}_*$ generated by the coordinate process
$$
\mathbb{X}: [0,\infty)\times \Omega \longrightarrow \hat{X},\quad \mathbb{X}_t(\omega):=\omega(t).
$$
For every subset $W\subset X$, let
\[
\tau_W:= \inf\{s\geq 0: \ \mathbb{X}_s\in X\setminus W\}:\Omega\longrightarrow [0,\infty]
\]
be the first exit time of $\mathbb{X}$ from $W$. Note that
$$
\{t<\tau_W\}=\{\mathbb{X}_s\in W\text{ for all $s\in [0,t]$}\}\quad\text{ for all $t\geq 0$.}
$$

For the sake of brevity we write
$$
Q^{(c)}:=Q^{(c)}_{v,\theta}|_{v=0,\theta=0},\quad Q:=Q_{v,\theta}|_{v=0,\theta=0} ,\quad L:= L_{v,\theta}|_{v=0,\theta=0}
$$
for the underlying free forms and operator, respectively. An essential property of $Q$ is that it is a regular symmetric Dirichlet form in $\ell^2(X,m)$. It follows automatically from Fukushima's theory that for every $x\in X$ there exists a unique probability measure $\mathbb{P}_x$ on $(\Omega,\mathcal{F})$ such that for all finite sequences $0=t_0<t_1<\dots < t_l$ and all $x=x_0,x_1,\dots, x_l\in \hat{X}$ one has
$$
\mathbb{P}_x\{\mathbb{X}_{t_1}=x_1,\dots, \mathbb{X}_{t_l}=x_l\}= \exp(-\delta_0L)(x_{0},x_1)m(x_1) \cdots \exp(-\delta_{l-1}L)(x_{l-1},x_l)m(x_l),
$$
where $\delta_j:=t_{j+1}-t_{j}$, and where $\exp(- tL)(\bullet,\bullet)$ is extended to $\hat{X}\times \hat{X}$ according to
\begin{align*}
&\exp(- tL)(y,\infty_X):=0,\quad  \exp(- tL)(\infty_X,\infty_X)=1,\\
&  \exp(- tL)(\infty_X,y):=  1-\sum_{z\in X} \exp(- tL)(z,y)m(z),\quad  y\in X .
\end{align*}

In addition, Fukushima's result entails that these measures are concentrated on paths having $\infty_X$ as a cemetery,
\begin{align}
\label{inti}\mathbb{P}_x\Big( \{\tau_X=\infty\}\cup \{ \text{$\tau_X<\infty$ and $\mathbb{X}_t=\infty_X$ for all $t\in [\tau_X,\infty)$} \} \Big)=1,
\end{align}
and that 
$$
\mathscr{M}:=(\Omega,\mathcal{F},\mathcal{F}_*,\mathbb{X},(\mathbb{P}_x)_{x\in X})
$$
is a reversible strong Markov process. For every $n\in\IN_{\geq 0}$ let $\tau_n:\Omega \to [0,\infty]$ denote the $n$-th jump time of $\mathbb{X}$ (with $\tau_0:=0$), an $\mathcal{F}_*$-stopping time. Let 
$$
N(\mathbb{X}):[0,\infty)\times \Omega\longrightarrow  \hat{\IN},\quad  N_t(\mathbb{X}):=\text{ number of jumps of $\IX|_{[0,t]}$} ,
$$
an $\mathcal{F}_*$-adapted process. We then define 
$$
\tau:=\lim_{n\to\infty}\tau_n:\Omega \longrightarrow [0,\infty],
$$
another $\mathcal{F}_*$-stopping time.

\begin{lemma}\label{error} a) For all $t\geq 0$, $x,y\in X$ one has 
\begin{align}
\label{inti2}&\mathbb{P}_x\{1_{\{N_t(\mathbb{X})<\infty\} }=1_{\{t<\tau_X\}}\} =1,\\
\label{inti3}& \mathbb{P}_x\left({\{N_t(\mathbb{X}) = 0\}} \right)=\exp(-t\mathrm{deg}(x)),\\
\label{inti4}&\mathbb{P}_x\{ \mathbb{X}_{\tau_n}\sim\mathbb{X}_{\tau_{n+1}}\text{ \emph{for all $n\in\IN$}}\}=1,\\
\label{inti5}&\mathbb{P}_x(N_t(\mathbb{X}) = 1,\mathbb{X}_{\tau_1}= y)/t\to b(x,y)/m(x)\quad\text{  as $t\searrow 0$.}
\end{align}
b) Let $f \in {C}_c(X)$, $t>0$, and let the function $\varphi_{t,f}:X\to \IC$ be defined by
$$
\varphi_{t,f}(x) := \frac{1}{t}\mathbb{E}_x\left[1_{\{2\leq N_t(\mathbb{X})<\infty\}} f(\mathbb{X}_t) \right]. 
$$
Then, for all $x\in X$, one has $\varphi_{t,f}(x)\to 0$ as $t\searrow 0$.
\end{lemma}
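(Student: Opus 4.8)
The guiding idea is that $\mathscr{M}$ is a realisation of the minimal continuous-time Markov chain generated by the free operator $L$: under $\mathbb{P}_x$ the process waits at $x$ for an exponentially distributed holding time of rate $\mathrm{deg}(x)$ and then jumps to a neighbour $y$ with probability proportional to $b(x,y)$. Concretely, one shows that under $\mathbb{P}_x$ the first jump time $\tau_1$ is $\mathrm{Exp}(\mathrm{deg}(x))$-distributed, that $\mathbb{X}_{\tau_1}=y$ with probability $b(x,y)/(m(x)\mathrm{deg}(x))$ independently of $\tau_1$, and that the strong Markov property restarts the process at $\mathbb{X}_{\tau_1}$. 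This jump/holding structure is the common source of all four identities in part a). It can be extracted either from the L\'evy system of the jump process attached to the regular Dirichlet form $Q$, or directly from the finite-dimensional distributions together with the short-time behaviour of $\exp(-tL)$, which in turn is controlled at the level of $Q$: writing $1_{\{z\}}$ for the indicator of $\{z\}\in C_c(X)$, the finite-dimensional law gives $\mathbb{P}_x\{\mathbb{X}_t=y\}=\exp(-tL)(x,y)m(y)=m(x)^{-1}\langle \exp(-tL)1_{\{y\}},1_{\{x\}}\rangle$, and the standard identity $t^{-1}\langle (1-\exp(-tL))f,g\rangle\to Q(f,g)$ as $t\searrow 0$ for $f,g$ in the form domain, combined with Green's formula, yields $t^{-1}\mathbb{P}_x\{\mathbb{X}_t=y\}\to b(x,y)/m(x)$ for $y\neq x$ and $t^{-1}(1-\mathbb{P}_x\{\mathbb{X}_t=x\})\to Q(1_{\{x\}},1_{\{x\}})/m(x)=\mathrm{deg}(x)$.

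Given this structure, I would dispatch part a) as follows. For \eqref{inti2} I would first argue that the explosion time $\tau=\lim_n\tau_n$ coincides $\mathbb{P}_x$-a.s. with the exit time $\tau_X$: by the cemetery property \eqref{inti} the process lies in $X$ before $\tau$ (where it makes only finitely many jumps on every compact subinterval) and equals $\infty_X$ afterwards, so the first hit of $\hat{X}\setminus X$ is exactly $\tau$. The pathwise identity $\{N_t<\infty\}=\{t<\tau\}$ is then immediate (jumps accumulate only at $\tau$), giving \eqref{inti2}. For \eqref{inti3}, the Markov property applied at a deterministic time shows that $u(t):=\mathbb{P}_x\{N_t=0\}=\mathbb{P}_x\{\tau_1>t\}$ is multiplicative and right-continuous, hence $u(t)=e^{-\lambda_x t}$; the rate is identified as $\lambda_x=\mathrm{deg}(x)$ via the holding structure (equivalently, via the form asymptotics above). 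Identity \eqref{inti4} is the statement that the jump kernel $b(\cdot,\cdot)/m(\cdot)$ is supported off the diagonal and on neighbours, so consecutive states of the embedded chain are distinct and adjacent. Finally \eqref{inti5} is the explicit single-jump computation: conditioning on $\tau_1=s$ and $\mathbb{X}_{\tau_1}=y$ and then staying at $y$,
\[
\mathbb{P}_x\{N_t=1,\mathbb{X}_{\tau_1}=y\}=\frac{b(x,y)}{m(x)}\int_0^t e^{-s\,\mathrm{deg}(x)}e^{-(t-s)\,\mathrm{deg}(y)}\,ds,
\]
whose integrand tends to $1$, so dividing by $t$ gives the limit $b(x,y)/m(x)$.

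For part b) I would avoid a direct estimate of the multi-jump probability and instead exploit an exact first-order cancellation. Since $f\in C_c(X)$ and $f(\infty_X)=0$, \eqref{inti2} and the finite-dimensional law give $\mathbb{E}_x[1_{\{N_t<\infty\}}f(\mathbb{X}_t)]=\mathbb{E}_x[f(\mathbb{X}_t)]=\exp(-tL)f(x)$. Splitting according to the number of jumps, and using that $N_t=0$ forces $\mathbb{X}_t=x$ while $N_t=1$ forces $\mathbb{X}_t=\mathbb{X}_{\tau_1}$, we obtain
\[
t\,\varphi_{t,f}(x)=\exp(-tL)f(x)-f(x)e^{-t\,\mathrm{deg}(x)}-\sum_{y} f(y)\,\mathbb{P}_x\{N_t=1,\mathbb{X}_{\tau_1}=y\},
\]
the last sum being finite because $f$ has finite support. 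Dividing by $t$ and letting $t\searrow 0$, the three terms converge by the form asymptotic $t^{-1}(f(x)-\exp(-tL)f(x))\to \ow{L}f(x)=\mathrm{deg}(x)f(x)-m(x)^{-1}\sum_y b(x,y)f(y)$ (Green's formula plus the spectral identity), by $t^{-1}(1-e^{-t\,\mathrm{deg}(x)})\to \mathrm{deg}(x)$, and by \eqref{inti5} applied termwise; the three limits cancel exactly to $0$, which is the assertion.

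The main obstacle is the very first step: rigorously pinning down the holding rate $\mathrm{deg}(x)$ and the jump intensities $b(x,y)/m(x)$ in the present generality, where the graph may be non-locally finite and $\mathrm{deg}$ unbounded. The delicate point underlying both the rate identification in \eqref{inti3}, \eqref{inti5} and any \emph{direct} treatment of part b) is the short-time control of two-or-more-jump events, e.g. $\mathbb{P}_x\{N_t\geq 2,\mathbb{X}_t=z\}=o(t)$ for fixed $z$; here unbounded degrees obstruct the naive bounds, and one must either invoke the L\'evy-system description of the Dirichlet-form process or argue through the first-jump decomposition together with dominated convergence (using $\exp(-tL)(x,y)m(y)\le 1$ and $\sum_y b(x,y)<\infty$ to control the sums over intermediate vertices). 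The cancellation argument for b) is precisely what lets one bypass a quantitative multi-jump estimate once \eqref{inti3} and \eqref{inti5} are in hand, with the compact support of $f$ guaranteeing that all vertex-sums that occur are finite.
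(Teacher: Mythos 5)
Your route is genuinely different from the paper's. The paper only proves \eqref{inti2} directly: since $Q$ has no killing term, the inclusion $\{N_t(\mathbb{X})<\infty\}\subset\{t<\tau_X\}$ holds $\mathbb{P}_x$-a.s., and then the two representations $\exp(-tL)f(x)=\mathbb{E}_x[1_{\{t<\tau_X\}}f(\mathbb{X}_t)]=\mathbb{E}_x[1_{\{N_t(\mathbb{X})<\infty\}}f(\mathbb{X}_t)]$ established in \cite{GKS} are played off against each other: the difference of the two indicators is nonnegative, integrates to zero against every $f$, and monotone convergence ($f\uparrow 1$) gives a.s.\ equality. Everything else --- \eqref{inti3}, \eqref{inti4}, \eqref{inti5} and b) --- is quoted from \cite{GKS} without proof. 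You instead rebuild the minimal-chain structure (exponential holding time of rate $\mathrm{deg}(x)$, jump law $b(x,\cdot)/(m(x)\mathrm{deg}(x))$, strong Markov restart) and derive all items from it. Granting that structure, your deductions are correct, and your part b) is genuinely nicer than a brute-force multi-jump estimate: the identity $t\varphi_{t,f}(x)=\exp(-tL)f(x)-f(x)e^{-t\,\mathrm{deg}(x)}-\sum_y f(y)\,\mathbb{P}_x\{N_t(\mathbb{X})=1,\mathbb{X}_{\tau_1}=y\}$, combined with the pointwise asymptotics $t^{-1}(f(x)-\exp(-tL)f(x))\to Q(f,\delta_x)/m(x)=\ow{L}_{0,0}f(x)$ (valid for $f\in C_c(X)$ by spectral calculus and Green's formula), does give exact cancellation.

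There are, however, two genuine gaps. First, the jump/holding structure you start from is precisely the nontrivial content the paper outsources to \cite{GKS}. Of your two proposed ways to obtain it, the L\'evy-system route is viable, but the route \lq\lq{}from the finite-dimensional distributions together with the short-time behaviour of $\exp(-tL)$\rq\rq{} is circular as sketched: the marginal asymptotics $t^{-1}\mathbb{P}_x\{\mathbb{X}_t=y\}\to b(x,y)/m(x)$ differ from \eqref{inti5} exactly by the term $t^{-1}\mathbb{P}_x\{2\le N_t(\mathbb{X})<\infty,\,\mathbb{X}_t=y\}$, which is part b) with $f=\delta_y$; so one cannot identify the jump intensities from the marginals and afterwards obtain b) by your cancellation --- that argument consumes its own conclusion. (For \eqref{inti3} alone the circle can be broken by dyadic subdivision, $\mathbb{P}_x\{N_t(\mathbb{X})=0\}=\lim_n\bigl[\mathbb{P}_x\{\mathbb{X}_{t/2^n}=x\}\bigr]^{2^n}$, using right-continuity and the Markov property, but you do not give this, and no such shortcut exists for \eqref{inti5}.) Second, in your argument for \eqref{inti2} the step \lq\lq{}equals $\infty_X$ afterwards\rq\rq{} does not follow from \eqref{inti}: that property concerns $\tau_X$, not $\tau=\lim_n\tau_n$. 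To invoke it you first need $\tau_X\le\tau$, i.e.\ $\mathbb{X}_\tau=\infty_X$ on $\{\tau<\infty\}$, which requires quasi-left-continuity of the Hunt process ($\mathbb{X}_{\tau_n}\to\mathbb{X}_\tau$ a.s.); similarly \lq\lq{}lies in $X$ before $\tau$\rq\rq{} is the absence of killing in $Q$, not a consequence of \eqref{inti}. Both points are repairable from Fukushima theory, but as written the attributions are wrong --- and avoiding exactly this pathwise analysis is why the paper routes its proof of \eqref{inti2} through the two semigroup formulas from \cite{GKS}.
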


\begin{proof} a) To see (\ref{inti2}), note that the inclusion $ \{N_{t}<\infty\}\subset  \{t<\tau_{X}\} $ $\mathbb{P}_x$-a.s., is immediate since the form $Q$ does not have a killing term. Using now the formula
$$
\exp(-tL)f(x)=\mathbb{E}_x\left[1_{\{t<\tau_{X}\}} f(\mathbb{X}_t) \right],
$$	
which has been shown in \cite{GKS}, we get
$$
0=\mathbb{E}_x\left[(1_{\{t<\tau_{X}\}}-1_{\{N_t(\mathbb{X})<\infty\}} )f(\mathbb{X}_t) \right],
$$
for all $f\in \ell^2(X,m)$. Letting $f$ tend to $1$ from below and using
$$
1_{\{t<\tau_{X}\}}-1_{\{N_t(\mathbb{X})<\infty\}}\geq 0\quad\text{$\mathbb{P}_x$-a.s.},
$$
we arrive at 
$$
0=\mathbb{E}_x\left[1_{\{t<\tau_{X}\}}-1_{\{N_t(\mathbb{X})<\infty\}}  \right]=\mathbb{E}_x\left[\left|1_{\{t<\tau_{X}\}}-1_{\{N_t(\mathbb{X})<\infty\}}  \right|\right],
$$
so that
$$
1_{\{t<\tau_{X}\}}=1_{\{N_t(\mathbb{X})<\infty\}}\quad\text{$\mathbb{P}_x$-a.s.}.
$$
The properties (\ref{inti3}), (\ref{inti4}), (\ref{inti5}) and b) have been shown in \cite{GKS}. 
\end{proof}

Given a magnetic potential $\theta$ on $(X,b)$, the stochastic line integral of $\mathbb{X}$ along $\theta$ is defined by
\begin{align*}
&\int_0^{\bullet} \theta(  d \mathbb{X}_s): [0,\infty)\times \Omega\longrightarrow \IR\quad \int_0^{t} \theta(  d \mathbb{X}_s)
:=\sum_{n= 1}^{N_t(\mathbb{X})}\theta(\mathbb{X}_{\tau_{n-1}}, \mathbb{X}_{\tau_{n}}),
\end{align*}
where $\int_0^{t} \theta(  d \mathbb{X}_s)$ is set $0$ if $N_t(\mathbb{X})=0$, or if $N_t(\mathbb{X})=\infty$, or if $1\leq N_t(\mathbb{X})<\infty$ and $b(\mathbb{X}_{\tau_{n-1}},\mathbb{X}_{\tau_{n}})=0$ for some $n=1,\dots,N_t(\mathbb{X})$. For an electric potential $v$ on $X$ we also have the usual Riemannian integral
\begin{align*}
&\int_0^{\bullet} v( \mathbb{X}_s) ds: [0,\infty)\times \Omega\longrightarrow \IR,\quad \int_0^{t} v (\mathbb{X}_s)ds
=\sum_{n= 1}^{N_t(\mathbb{X})+1} v(\mathbb{X}_{\tau_{n-1}})(\tau_{n}-\tau_{n-1}),
\end{align*}
where $\int_0^{t} v (\mathbb{X}_s)ds$ is set $0$, if $N_t(\mathbb{X})=\infty$. Clearly these processes are $\mathcal{F}_*$-adapted. 


\section{The Feynman path integral formula}\label{haup}
\subsection{Statement}

We recall that by the countability of $X$, for every bounded operator $A$ in $\ell^2(X,m)$ there is a uniquely determined map
$$
A(\cdot,\cdot): X\times X\longrightarrow \IC
$$
which satisfies 
$$
Af(x)=\sum_{y\in X}A(x,y)f(y)m(y)\quad\text{for all $f\in \ell^2(X,m)$, $x\in X$}.
$$
In fact, with $\delta_x\in C_c(X)$ the usual delta-function centered at $x$, one has
\begin{align}\label{l2}
 A(x,y)=m(x)^{-1}\overline{A^*\delta_x(y)},\quad\text{ so that $A(x,\cdot)\in \ell^2(X,m)$ for all $x$}.
\end{align}
 In addition, it holds that
\begin{align}\label{l3}
A^*(x,y)=\overline{A(y,x)},\quad\text{ so that also $A(\cdot,x)\in \ell^2(X,m)$ for all $x$,}
\end{align}
and one has the composition formula
\begin{align}\label{compo}
[AB](x,y)=\sum_{z\in X} A(x,z)B(z,y) m(z)
\end{align}
for the integral kernel of a composition.\vspace{1mm}

Given a magnetic potential $\theta$ on $(X,b)$ and an electric potential $v$ on $X$ we define an $\mathcal{F}_*$-adapted process
$$
\IAA(v,\theta|\mathbb{X}):\Omega\times [0,\infty)\longrightarrow \IC
$$
by setting 
\begin{align*}
\IAA_t(v,\theta|\mathbb{X}):=i\int_0^t \theta( d \mathbb{X}_s)-i\int_{0}^{t}(v(\mathbb{X}_s)+\mathrm{deg}(\mathbb{X}_s))ds+\int^t_0\mathrm{deg}(\mathbb{X}_s)ds,\quad t\geq 0.
\end{align*}

Here comes our main result:

\begin{thm}\label{main}\emph{(Feynman path integral formula)} Let $\theta$ be a magnetic potential on $(X,b)$ and let $v$ be an electric potential on $X$ such that $Q^{(c)}_{v,\theta}$ and $Q^{(c)}_{-\mathrm{deg},0}$ are semi-bounded from below and closable. Then for all $t\geq 0$, $x,y\in X$, one has
	\begin{align}\label{fki}
	\exp(-itL_{v,\theta})(x,y) = \f{1}{m(y)}\mathbb{E}_x\left[1_{\{\mathbb{X}_t=y\}\cap\{N_t(\mathbb{X})<\infty\}}i^{N_t(\mathbb{X})}\exp(\IAA_t(v,\theta|\mathbb{X}))\right].
	\end{align}
	\end{thm}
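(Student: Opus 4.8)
The plan is to realize the right-hand side of (\ref{fki}) as a strongly continuous semigroup of bounded operators on $\ell^2(X,m)$, compute its generator on $C_c(X)$, and match it with $-iL_{v,\theta}$. Write $U_t f(x):=\mathbb{E}_x\big[1_{\{N_t(\mathbb{X})<\infty\}}\,i^{N_t(\mathbb{X})}\exp(\IAA_t(v,\theta|\mathbb{X}))f(\mathbb{X}_t)\big]$, so that the claimed kernel is $U_t(x,y)$. First I would record that $\mathrm{Re}\,\IAA_t=\int_0^t\mathrm{deg}(\mathbb{X}_s)\,ds$ and $|i^{N_t}|=1$, whence $|U_tf(x)|\leq\mathbb{E}_x[1_{\{N_t<\infty\}}\exp(\int_0^t\mathrm{deg}\,ds)|f|(\mathbb{X}_t)]=\exp(-tL_{-\mathrm{deg},0})|f|(x)$ by the Feynman--Kac formula for the Schr\"odinger semigroup (legitimate since $Q^{(c)}_{-\mathrm{deg},0}$ is semi-bounded and closable). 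This simultaneously gives boundedness of $U_t$ and, at the kernel level, the Kato--Simon bound. The semigroup property $U_{s+t}=U_sU_t$ then follows from the Markov property of $\mathscr M$ at time $s$ together with the additive splitting of $N_\bullet$ and of the line, time and degree integrals over $[0,s]$ and $[s,s+t]$; strong continuity comes from the pointwise limits below plus domination.

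The heart of the matter is identifying the generator on $C_c(X)$ by decomposing the path integral according to $N_t\in\{0,1,\geq 2\}$. On $\{N_t=0\}$ the path is constant at $x$, the weight $\exp(t\,\mathrm{deg}(x))$ cancels $\mathbb{P}_x(N_t=0)=\exp(-t\,\mathrm{deg}(x))$ from (\ref{inti3}), and this term equals $\exp(-it(v(x)+\mathrm{deg}(x)))f(x)$, with $t$-derivative $-i(v+\mathrm{deg})f$. On $\{N_t=1,\,\mathbb{X}_{\tau_1}=y\}$ one has $\int_0^t\theta(d\mathbb{X}_s)=\theta(x,y)$ and $\mathbb{X}_t=y$, so by the jump-rate limit (\ref{inti5}) this term contributes $+i\sum_y m(x)^{-1}b(x,y)e^{i\theta(x,y)}f(y)$; the sum is finite because $f(\mathbb{X}_t)=f(y)\neq 0$ forces $y\in\mathrm{supp}\,f$. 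Finally $\{N_t\geq 2\}$ is negligible by Lemma \ref{error}~b). The first two contributions add up to exactly $-i\ow{L}_{v,\theta}f(x)$, so pairing against $g\in C_c(X)$ and using Green's formula,
\[
\lim_{t\downarrow 0}\tfrac1t\langle U_tf-f,\,g\rangle=\langle -i\ow{L}_{v,\theta}f,\,g\rangle=-i\,Q^{(c)}_{v,\theta}(f,g).
\]
The point of testing against a finitely supported $g$ is that every expression stays a finite sum, so the computation is meaningful even when $\ow{L}_{v,\theta}f\notin\ell^2(X,m)$.

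To conclude, I would upgrade this weak generator identity to $U_t=\exp(-itL_{v,\theta})$. In the benign situation (e.g. $\ow{L}_{v,\theta}[C_c(X)]\subseteq\ell^2(X,m)$, where $C_c(X)$ is a genuine operator core, or simply on a finite graph where $L_{v,\theta}$ is bounded) the above shows that the $C_0$-semigroup $U_t$ and the unitary group $\exp(-itL_{v,\theta})$ have generators agreeing on the core $C_c(X)$, hence coincide, giving (\ref{fki}). The hard part will be the full generality of the theorem, for two reasons: (i) the degree weight $\exp(\int_0^t\mathrm{deg}\,ds)$ is unbounded, so the $N_t\geq 2$ estimate cannot quote Lemma \ref{error}~b) verbatim but must be localized, and (ii) $\ow{L}_{v,\theta}f$ may leave $\ell^2(X,m)$, which is precisely why only the weak/form formulation survives. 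I would handle both by approximation: exhaust $X$ by finite subsets, on each of which the generator argument is immediate, and pass to the limit using the Kato--Simon domination $|U_t(x,y)|\leq\exp(-tL_{-\mathrm{deg},0})(x,y)$ together with dominated convergence on the probabilistic side, and monotone convergence of the forms $Q^{(c)}_{v,\theta}$ (hence strong resolvent convergence of the $L_{v,\theta}$) on the operator side.
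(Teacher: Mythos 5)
Your outline reproduces the paper's own two-step strategy: first identify the generator of the path-integral semigroup on finite subgraphs via the decomposition $N_t(\mathbb{X})\in\{0,1,\geq 2\}$ together with (\ref{inti3}), (\ref{inti5}) and Lemma \ref{error}~b) (this is exactly Lemma \ref{generator} and Proposition \ref{p:finite}), and then exhaust $X$ by finite sets $X_n$ and pass to the limit, controlling the probabilistic side by dominated convergence with the majorant $\exp\big(\int_0^t \mathrm{deg}(\mathbb{X}_s)\,ds\big)|f(\mathbb{X}_t)|$ and the Feynman--Kac identity $\mathbb{E}_x\big[1_{\{t<\tau_X\}}\exp\big(\int_0^t\mathrm{deg}(\mathbb{X}_s)\,ds\big)|f|(\mathbb{X}_t)\big]=\exp(-tL_{-\mathrm{deg},0})|f|(x)$, which is where the hypothesis on $Q^{(c)}_{-\mathrm{deg},0}$ enters. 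That part of your proposal is sound, and your observation that the $N_t\geq 2$ estimate and the possible failure of $\ow{L}_{v,\theta}f\in\ell^2(X,m)$ force one to work on finite pieces is exactly why the paper runs the generator computation only for finite $W$.

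The genuine gap is the operator-theoretic half of the limit, which you dispose of in one clause: ``monotone convergence of the forms $Q^{(c)}_{v,\theta}$ (hence strong resolvent convergence of the $L_{v,\theta}$)''. Two things are missing there. First, the operators $L^{(X_n)}_{v,\theta}$ act on the \emph{varying} Hilbert spaces $\ell^2(X_n,m)$, not on $\ell^2(X,m)$, so neither the Kato--Simon monotone convergence theorems nor the classical implication ``strong resolvent convergence $\Rightarrow$ $\psi(L_n)\to\psi(L)$ strongly for all $\psi\in C_b(\IR)$'' (Theorem VIII.20 in \cite{RS1}) applies off the shelf; one must work with the identification maps $\iota_{X_n},\pi_{X_n}$, and for a decreasing sequence of forms one only gets convergence to the operator of the regular part of the limit form, whose identification with $L_{v,\theta}$ requires a separate argument. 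Second, and more fundamentally, what the existing varying-space theory (Mosco convergence in the sense of \cite{CKK}, verified for $Q^{(X_n)}_{v,\theta}\to Q_{v,\theta}$ in \cite{GKS}) delivers is strong convergence of the \emph{semigroups} $\exp(-aL^{(X_n)}_{v,\theta})$, $a\geq 0$, i.e.\ of the functional calculus for $C_\infty$-functions; what the theorem needs is convergence of $\psi(L^{(X_n)}_{v,\theta})$ for the merely bounded continuous function $\psi(\lambda)=e^{-it\lambda}$. Bridging exactly this gap is the paper's main technical contribution, Theorem \ref{mosco.char}, proved in the appendix by adapting the Reed--Simon argument to the maps $\iota_k,\pi_k$ with the cutoffs $g_l(\lambda)=e^{-\lambda^2/l}$. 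Your step can be repaired --- either by proving such a theorem, or by extending each $L^{(X_n)}_{v,\theta}$ by zero on $\ell^2(X\setminus X_n,m)$ so as to land in the fixed-space setting of \cite{RS1}, which still requires identifying the strong resolvent limit --- but as written the proposal treats as routine precisely the step for which the paper had to prove a new result.
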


The proof of Theorem \ref{main} is given in the next two sections. One first proves the Feynman path integral formula on finite subgraphs, and then uses an exhaustion argument which relies on a new result from Mosco convergence theory (cf. Theorem \ref{mosco.char}) which is proved in the appendix. \vspace{2mm}

Note that by (\ref{inti}) and Lemma \ref{error}, for all $x,y\in X$, $t\geq 0$, we actually have 
\begin{align*}
1_{\{\mathbb{X}_t=y\}\cap\{N_t(\mathbb{X})<\infty\}}&=1_{\{\mathbb{X}_t=y\}}=1_{\{\mathbb{X}_t=y\}\cap\{N_t(\mathbb{X})<\infty\} \cap \{t<\tau_X\}} \\
&=1_{\{\mathbb{X}_t=y\}\cap \{N_t(\mathbb{X})<\infty,       
  \>b(\mathbb{X}_{\tau_{n-1}},\mathbb{X}_{\tau_{n}})>0\text{ for all $n=0,\dots,N_t(\mathbb{X})$}  \} \cap \{t<\tau_X\}},\quad\text{$\mathbb{P}_x$-a.s. }
\end{align*}


\begin{remark}Clearly, $Q^{(c)}_{-\mathrm{deg},0}$ is closable and bounded from below whenever $ \deg $ is  a bounded function (in which case the form $Q^{(c)}_{-\mathrm{deg},0}$ is bounded as a sum of two bounded forms, namely,  
$Q^{(c)}_{0,0}$ and the form induced by $ -\deg $, cf. \cite{HKLW} for the boundedness of $Q^{(c)}_{0,0}$). The boundedness of $ \deg $ also implies the stochastic completess of $(X,b,m)$. The boundedness of  $ \deg $ is certainly a natural assumption in the context of solid state physics.\\
In the case of locally finite graphs the operator $ \widetilde{L}_{-\deg,0} $ is a well-defined symmetric operator with domain of definition $ C_{c}(X) $, and is therefore closable. The semiboundedness of $\widetilde{L}_{-\deg,0}$ has been investigated by 
	Gol\'enia  in \cite{Gol}, where the author examines whether the weighted adjacency matrix of a locally finite graph is unbounded from below. Indeed, the quadratic form of the adjacency matrix acting on the finitely supported functions is exactly $Q^{(c)}_{-\mathrm{deg},0}$. A negative result in this context is that, in case the edge weights $ b $ are unbounded, it follows that $Q^{(c)}_{-\mathrm{deg},0}$ is unbounded from below.
\end{remark}

As it should be, the Feynman path integral formula also holds for negative times in the following sense: For all $t\geq 0$, $x,y\in X$, we have, using (\ref{l3}), 
\begin{align*}
&\exp(-i(-t)L_{v,\theta})(x,y)=\exp(itL_{v,\theta})(x,y)=\overline{\exp(-itL_{v,\theta})(y,x)}\\
&=\f{1}{m(x)}\mathbb{E}_y\left[1_{\{\mathbb{X}_{t}=x\}\cap \{N_t(\mathbb{X})<\infty\}}\overline{i^{N_t(\mathbb{X})} \exp(\IAA_t(v,\theta|\mathbb{X}))} \right].
\end{align*}
The above result for magnetic Schrödinger groups should be compared with the Feynman-Kac-Ito formula \cite{GKS} for magnetic Schrödinger semigroups: The latter states that if $Q^{(c)}_{v,\theta}$ and $Q^{(c)}_{v,0}$ are bounded from below and closable, then for all $t\geq 0$, $x,y\in X$ one has
\begin{align*} 
	\exp(-tL_{v,\theta})(x,y) = \f{1}{m(y)}\mathbb{E}_x\left[1_{\{\mathbb{X}_t=y\}} \exp\left(i\int_0^t \theta( d \mathbb{X}_s)- \int_{0}^{t} v(\mathbb{X}_s) ds\right)\right].
\end{align*}
It would also be interesting to see to what extent the Feynman path integral formula can be generalized to the setting of covariant Schrödinger operators in weighted graphs (a generalized Feynman-Kac-Ito formula \cite{GKS} for covariant Schrödinger semigroups has been established in \cite{GMT} and used in \cite{semic} to calculate semiclassical limits).\vspace{2mm}

An immediate but nevertheless important consequence of the above formulae is the following very surprising Kato-Simon  domination for magnetic Schrödinger groups in terms of certain geometric Schrödinger semigroups:

\begin{coro}\label{apisss} Under the assumptions of Theorem \ref{main}, for all $t\geq 0$, $x,y\in X$ one has
$$
|\exp(-itL_{v,\theta})(x,y)|\leq \exp(-tL_{-\mathrm{deg},0})(x,y).
$$
\end{coro}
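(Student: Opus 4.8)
The plan is to read the inequality straight off the path integral formula (\ref{fki}), combined with the Feynman-Kac-Ito representation of the semigroup $\exp(-tL_{-\mathrm{deg},0})$. The decisive observation is a pointwise computation of the modulus of the integrand appearing in (\ref{fki}). Since $\theta$, $v$ and $\mathrm{deg}$ are all real-valued, the first two summands
$$
i\int_0^t \theta(d\mathbb{X}_s) - i\int_0^t \big(v(\mathbb{X}_s) + \mathrm{deg}(\mathbb{X}_s)\big)ds
$$
of $\IAA_t(v,\theta|\mathbb{X})$ are purely imaginary on $\{N_t(\mathbb{X})<\infty\}$, so that $\mathrm{Re}\,\IAA_t(v,\theta|\mathbb{X}) = \int_0^t \mathrm{deg}(\mathbb{X}_s)\,ds$ there. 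Consequently $|\exp(\IAA_t(v,\theta|\mathbb{X}))| = \exp\big(\int_0^t \mathrm{deg}(\mathbb{X}_s)\,ds\big)$, and moreover $|i^{N_t(\mathbb{X})}| = 1$ on $\{N_t(\mathbb{X})<\infty\}$.

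First I would apply the triangle inequality $|\mathbb{E}_x[Z]| \leq \mathbb{E}_x[|Z|]$ to the right-hand side of (\ref{fki}). Using the two identities above, the modulus of the integrand collapses to $1_{\{\mathbb{X}_t=y\}\cap\{N_t(\mathbb{X})<\infty\}}\exp\big(\int_0^t \mathrm{deg}(\mathbb{X}_s)\,ds\big)$, a nonnegative real random variable. Next I would invoke the $\mathbb{P}_x$-a.s. identity $1_{\{\mathbb{X}_t=y\}\cap\{N_t(\mathbb{X})<\infty\}} = 1_{\{\mathbb{X}_t=y\}}$ recorded in the remark following Theorem \ref{main} (a consequence of (\ref{inti}) and (\ref{inti2})) in order to drop the jump constraint from the indicator.

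Finally I would recognize the resulting quantity $m(y)^{-1}\mathbb{E}_x\big[1_{\{\mathbb{X}_t=y\}}\exp\big(\int_0^t \mathrm{deg}(\mathbb{X}_s)\,ds\big)\big]$ as precisely the Feynman-Kac-Ito representation of $\exp(-tL_{-\mathrm{deg},0})(x,y)$, applied with electric potential $-\mathrm{deg}$ and trivial magnetic potential $\theta=0$; its hypotheses (closability and semiboundedness of $Q^{(c)}_{-\mathrm{deg},0}$) are exactly among the standing assumptions of Theorem \ref{main}, so the representation is legitimate. I do not expect any genuine obstacle here, as the entire analytic content already resides in Theorem \ref{main}: the only points requiring a moment's care are the purely-imaginary-versus-real splitting of $\IAA_t$ and the verification that the Feynman-Kac-Ito formula is indeed applicable under the present hypotheses.
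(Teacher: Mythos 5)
Your proof is correct and follows essentially the same route as the paper, which obtains the corollary by taking the modulus inside the expectation in the Feynman path integral formula (\ref{fki}), noting that $i^{N_t(\mathbb{X})}$ and the first two summands of $\IAA_t(v,\theta|\mathbb{X})$ contribute only unimodular factors, and comparing with the Feynman--Kac formula for $\exp(-tL_{-\mathrm{deg},0})$. Your additional care about the a.s.\ identity of the indicators and about the applicability of the Feynman--Kac formula under the standing assumption on $Q^{(c)}_{-\mathrm{deg},0}$ matches the paper's remarks exactly.
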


This estimate is to be compared with the domination result for magnetic Schrödinger semigroups, which reads
$$
|\exp(- tL_{v,\theta})(x,y)|\leq \exp(-tL_{v,0})(x,y),
$$
provided $Q^{(c)}_{v,\theta}$ and $Q^{(c)}_{v,0}$ are semi-bounded from below and closable. We expect that Corollary \ref{apisss} should play an important role in the derivation of Kato-Strichartz estimates on general weighted graphs (cf. Theorem 1.1 in \cite{jacob} for the unweighted standard lattice in $\IZ^d$.)\\
As a byproduct of our proof of the Feynman path integral formula, we also get the following result for possibly infinite subgraphs: To this end, if $W\subset X$ is any possibly infinite subset, we define $Q^{(c,W)}_{v,\theta}$ to be the restriction of $Q^{(c)}_{v,\theta}$ to $ C_{c}(W) $. Then, taking the closure in 
$$
\ell^2(W,m):=\ell^2(W,m|_W)
$$
yields a closed form 
$Q^{(W)}_{v,\theta}$ with associated operator $L^{(W)}_{v,\theta}$.

\begin{coro} Under the assumptions of Theorem \ref{main}, let $W\subset X$ be an arbitrary subset. Then for all $t\geq 0$, $x,y\in W$ one has the following Feynman path integral formula,
	\begin{align*} 
	\exp(-itL^{(W)}_{v,\theta})(x,y)=  \f{1}{m(y)}\mathbb{E}_x\left[1_{\{\mathbb{X}_t=y\}\cap \{N_t(\mathbb{X})<\infty\}\cap \{t<\tau_W\}}i^{N_t(\mathbb{X})} \exp(\IAA_t(v,\theta|\mathbb{X}))\right].
	\end{align*}  
\end{coro}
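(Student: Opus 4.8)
The plan is to obtain the corollary from Theorem \ref{main} by running the very proof scheme of Theorem \ref{main} — the formula on finite subgraphs followed by an exhaustion — but terminating the exhaustion at $W$ instead of at $X$. First I would check that the hypotheses descend to $W$: since $C_c(W)\subseteq C_c(X)$ and $Q^{(c,W)}_{v,\theta}$, $Q^{(c,W)}_{-\mathrm{deg},0}$ are, by definition, the restrictions of $Q^{(c)}_{v,\theta}$, $Q^{(c)}_{-\mathrm{deg},0}$ to this smaller form domain, semiboundedness is inherited with the \emph{same} lower bound and closability is preserved under restriction to a subspace. Hence $L^{(W)}_{v,\theta}$ is a well-defined self-adjoint operator and the left-hand side is meaningful.

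Next I would fix finite subsets $W_n\uparrow W$ and regard each $L^{(W_n)}_{v,\theta}$ as an operator in $\ell^2(W,m)$ via the natural embedding $\ell^2(W_n,m)\hookrightarrow\ell^2(W,m)$. For finite $W_n$ the space $\ell^2(W_n,m)$ is finite-dimensional and the identity
\begin{align*}
\exp(-itL^{(W_n)}_{v,\theta})(x,y)=\f{1}{m(y)}\mathbb{E}_x\left[1_{\{\mathbb{X}_t=y\}\cap\{N_t(\mathbb{X})<\infty\}\cap\{t<\tau_{W_n}\}}\,i^{N_t(\mathbb{X})}\exp(\IAA_t(v,\theta|\mathbb{X}))\right]
\end{align*}
is precisely the finite-subgraph formula established in the course of proving Theorem \ref{main}; the killing indicator $\{t<\tau_{W_n}\}$ encodes that $L^{(W_n)}_{v,\theta}$ generates the free process killed on leaving $W_n$, while the degree appearing in $\IAA_t$ is the degree of $(X,b,m)$ throughout. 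It then remains to pass to the limit $n\to\infty$ on both sides.

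On the analytic side I would apply the Mosco convergence result of Theorem \ref{mosco.char}: as the form domains increase, $C_c(W_n)\uparrow C_c(W)$ with a common lower bound, one gets $L^{(W_n)}_{v,\theta}\to L^{(W)}_{v,\theta}$ in the strong resolvent sense, whence $\exp(-itL^{(W_n)}_{v,\theta})\to\exp(-itL^{(W)}_{v,\theta})$ strongly (as $\lambda\mapsto\exp(-it\lambda)$ is bounded and continuous), and in particular $\exp(-itL^{(W_n)}_{v,\theta})(x,y)\to\exp(-itL^{(W)}_{v,\theta})(x,y)$ for fixed $x,y\in W$. On the probabilistic side, $W_n\uparrow W$ forces $\tau_{W_n}\uparrow\tau_W$, so on $\{N_t(\mathbb{X})<\infty\}$ — where $\mathbb{X}$ visits only finitely many vertices on $[0,t]$ — one has $1_{\{t<\tau_{W_n}\}}\uparrow 1_{\{t<\tau_W\}}$ $\mathbb{P}_x$-almost surely. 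Since $|i^{N_t(\mathbb{X})}\exp(\IAA_t(v,\theta|\mathbb{X}))|=\exp(\int_0^t\mathrm{deg}(\mathbb{X}_s)\,ds)$ and
\begin{align*}
\f{1}{m(y)}\mathbb{E}_x\left[1_{\{\mathbb{X}_t=y\}\cap\{N_t(\mathbb{X})<\infty\}}\exp\Big(\int_0^t\mathrm{deg}(\mathbb{X}_s)\,ds\Big)\right]=\exp(-tL_{-\mathrm{deg},0})(x,y)<\infty,
\end{align*}
the integrands are dominated uniformly in $n$ by a $\mathbb{P}_x$-integrable function; dominated convergence then yields convergence of the right-hand sides to the asserted quantity, and equating the two limits proves the corollary.

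The step I expect to be most delicate is transferring the Mosco/monotone convergence of the \emph{forms} to strong resolvent convergence and then to pointwise convergence of the \emph{unitary} kernels on the fixed space $\ell^2(W,m)$: one must verify the hypotheses of Theorem \ref{mosco.char} for the increasing family $(Q^{(W_n)}_{v,\theta})_n$ with uniform lower bound. The integrability that powers the dominated-convergence step is supplied exactly by the standing assumption that $Q^{(c)}_{-\mathrm{deg},0}$ be semi-bounded and closable, via the Feynman--Kac representation of $\exp(-tL_{-\mathrm{deg},0})$ recorded in the introduction.
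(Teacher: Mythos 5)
Your proposal is correct and is essentially the paper's own argument: the paper presents this corollary precisely as a byproduct of the proof of Theorem \ref{main}, namely running the exhaustion scheme (Proposition \ref{p:finite} on finite sets, Mosco convergence via Theorem \ref{mosco.char}, then dominated convergence powered by the Feynman--Kac representation of $\exp(-tL_{-\mathrm{deg},0})$) with finite sets $W_n\uparrow W$ instead of $X_n\uparrow X$. Your observations that semiboundedness and closability pass to the restricted forms, that $1_{\{t<\tau_{W_n}\}}\uparrow 1_{\{t<\tau_W\}}$ on $\{N_t(\mathbb{X})<\infty\}$, and that the Mosco step for the increasing family $(Q^{(W_n)}_{v,\theta})_n$ is the only point requiring the cited convergence result, match the paper's intended reasoning exactly.
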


In view of the definition of the wave operators from time dependent scattering theory, we expect that the following formula will play an important role in the context of scattering theory (cf. \cite{jacob} for some scattering results on the unweighted standard lattice in $\IZ^d$):

\begin{prop}\label{scatt} Let $\theta$, $\theta'$ be magnetic potentials on $(X,b)$ and let $v,v'$ be electric potentials on $X$ and assume that $Q^{(c)}_{v,\theta}$, $Q^{(c)}_{v',\theta'}$ and $Q^{(c)}_{-\mathrm{deg},0}$ are semi-bounded from below and closable. Then for all $t\geq 0$, $x,y\in X$, the integral kernel of $\exp(-itL_{v,\theta})\exp(itL_{v',\theta'})$ is given by
\begin{align*}
&\big[\exp(-itL_{v,\theta})\exp(itL_{v',\theta'}) \big](x,y)\\
&= \mathbb{E}_x\left[1_{\{  N_t(\mathbb{X})<\infty\}}i^{N_t(\mathbb{X})} \IAA_t(v,\theta|\mathbb{X})m(\IX_t)^{-1}\mathbb{E}_y\left[1_{\{\mathbb{X}'_t=\mathbb{X}_t\}\cap\{N_t(\mathbb{X}')<\infty\}}\overline{i^{N_t(\mathbb{X}')} \exp\big(\IAA_t(v',\theta'|\mathbb{X}')\big)}\right]\right],
\end{align*}
where $\mathbb{X}'$ denotes an independent copy of $\mathbb{X}$.
\end{prop}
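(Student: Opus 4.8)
The plan is to read off the two kernels from Theorem~\ref{main} and glue them with the composition formula (\ref{compo}). Write $A:=\exp(-itL_{v,\theta})$ and $B:=\exp(itL_{v',\theta'})$. Theorem~\ref{main} applies to both $(v,\theta)$ and $(v',\theta')$, since the relevant forms are assumed semi-bounded from below and closable, and it gives $A(x,z)$ directly; for $B$ I would invoke the negative-time identity established right after Theorem~\ref{main} (which rests on (\ref{l3})), namely
\begin{align*}
B(z,y)=\exp(itL_{v',\theta'})(z,y)=\frac{1}{m(z)}\mathbb{E}_y\Big[1_{\{\mathbb{X}'_t=z\}\cap\{N_t(\mathbb{X}')<\infty\}}\overline{i^{N_t(\mathbb{X}')}\exp(\IAA_t(v',\theta'|\mathbb{X}'))}\Big],
\end{align*}
where $\mathbb{X}'$ is an independent copy of $\mathbb{X}$ (the coordinate process under an independent Markov family). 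Plugging both expressions into (\ref{compo}) would yield
\begin{align*}
[AB](x,y)=\sum_{z\in X}\frac{1}{m(z)}\,\mathbb{E}_x\big[1_{\{\mathbb{X}_t=z\}}\,W\big]\,\mathbb{E}_y\big[1_{\{\mathbb{X}'_t=z\}}\,\overline{W'}\big],
\end{align*}
with $W:=1_{\{N_t(\mathbb{X})<\infty\}}i^{N_t(\mathbb{X})}\exp(\IAA_t(v,\theta|\mathbb{X}))$ and $W'$ the analogous expression built from $(v',\theta',\mathbb{X}')$.

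Next I would interchange the sum over $z$ with the two expectations. By independence of $\mathbb{X}$ and $\mathbb{X}'$ together with Tonelli, each summand is the $(\mathbb{P}_x\otimes\mathbb{P}_y)$-integral of $m(z)^{-1}1_{\{\mathbb{X}_t=z\}}1_{\{\mathbb{X}'_t=z\}}W\overline{W'}$; summing over $z\in X$ collapses the two indicators into $1_{\{\mathbb{X}_t=\mathbb{X}'_t\}}$ and turns the weight into $m(\mathbb{X}_t)^{-1}$. One should note that on $\{N_t(\mathbb{X})<\infty\}\cap\{N_t(\mathbb{X}')<\infty\}$ both endpoints lie in $X$ by (\ref{inti2}) and (\ref{inti}), so the sum over $X$ (rather than $\hat X$) loses nothing and no $\infty_X$-contribution arises. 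Rewriting the resulting product integral as an iterated expectation then reproduces exactly the claimed formula.

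The one point demanding care — and the main obstacle — is legitimizing the interchange of the possibly infinite sum over $z$ with the expectations; for this I would establish absolute summability via the Kato--Simon bound. On $\{N_t<\infty\}$ one has $|W|=\exp\!\big(\int_0^t\mathrm{deg}(\mathbb{X}_s)\,ds\big)$, since $\operatorname{Re}\IAA_t(v,\theta|\mathbb{X})=\int_0^t\mathrm{deg}(\mathbb{X}_s)\,ds$ and $|i^{N_t(\mathbb{X})}|=1$, and similarly for $W'$. Hence $|A(x,z)|\le\exp(-tL_{-\mathrm{deg},0})(x,z)$ and, using (\ref{l3}) and the symmetry of the real self-adjoint kernel $\exp(-tL_{-\mathrm{deg},0})$, $|B(z,y)|=|\exp(-itL_{v',\theta'})(y,z)|\le\exp(-tL_{-\mathrm{deg},0})(y,z)$ by Corollary~\ref{apisss}. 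Therefore
\begin{align*}
\sum_{z\in X}|A(x,z)|\,|B(z,y)|\,m(z)\le\big[\exp(-tL_{-\mathrm{deg},0})\exp(-tL_{-\mathrm{deg},0})\big](x,y)=\exp(-2tL_{-\mathrm{deg},0})(x,y)<\infty,
\end{align*}
where finiteness follows from (\ref{l2}) applied to the bounded operator $\exp(-2tL_{-\mathrm{deg},0})$. This single estimate simultaneously validates the application of (\ref{compo}) and the Tonelli/Fubini interchange, which completes the argument.
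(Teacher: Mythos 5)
Your proof is correct, and its skeleton matches the paper's --- both read off $\exp(itL_{v',\theta'})(\cdot,y)$ from the negative-time identity based on (\ref{l3}) and glue the two kernels via (\ref{compo}) --- but the analytic core is handled quite differently. The paper performs no interchange at all: it sets $h:=\exp(itL_{v',\theta'})(\cdot,y)$, notes $h\in\ell^2(X,m)$ by (\ref{l3}), writes $[\exp(-itL_{v,\theta})\exp(itL_{v',\theta'})](x,y)=\exp(-itL_{v,\theta})h(x)$, and then applies the vector-valued form of the Feynman formula,
\begin{align*}
\exp(-itL_{v,\theta})f(x)=\mathbb{E}_x\left[1_{\{N_t(\mathbb{X})<\infty\}}\,i^{N_t(\mathbb{X})}\exp(\IAA_t(v,\theta|\mathbb{X}))\,f(\mathbb{X}_t)\right],\qquad f\in\ell^2(X,m),
\end{align*}
which is what the proof of Theorem~\ref{main} actually establishes (the kernel statement is the case $f=\delta_y$ up to normalization); substituting the expression for $h(\mathbb{X}_t)$ then yields the nested expectation in one line, all summability being absorbed into the operator--kernel relation for bounded operators on $\ell^2$. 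You instead work purely at the kernel level on the product space $(\Omega\times\Omega,\mathbb{P}_x\otimes\mathbb{P}_y)$, so you must justify exchanging $\sum_z$ with the double expectation, and your domination argument does this correctly: since $|W|=\exp\big(\int_0^t\mathrm{deg}(\mathbb{X}_s)\,ds\big)$ on $\{N_t(\mathbb{X})<\infty\}$, the Feynman--Kac formula turns the Tonelli sum into exactly $\sum_z m(z)\exp(-tL_{-\mathrm{deg},0})(x,z)\exp(-tL_{-\mathrm{deg},0})(y,z)=\exp(-2tL_{-\mathrm{deg},0})(x,y)<\infty$ (using the symmetry of that nonnegative kernel together with (\ref{compo})), and your handling of the cemetery point via (\ref{inti2}) and (\ref{inti}) is exactly what is needed to collapse the two indicators into $1_{\{\mathbb{X}_t=\mathbb{X}'_t\}}$. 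In short: the paper's route is shorter because it quietly exploits the stronger, $f\in\ell^2$ version of Theorem~\ref{main} that is visible only in its proof, whereas your route is longer but self-contained, resting only on the stated kernel formulas, Corollary~\ref{apisss} and the Feynman--Kac formula. A final point in your favour: your derivation produces $\exp(\IAA_t(v,\theta|\mathbb{X}))$ in the outer expectation, in agreement with the paper's own proof; the bare $\IAA_t(v,\theta|\mathbb{X})$ appearing in the displayed statement of Proposition~\ref{scatt} is a typo.
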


Note that, explicitly, the formula from Proposition \ref{scatt}  reads as follows:
\begin{align*}
&\big[\exp(-itL_{v,\theta})\exp(itL_{v',\theta'}) \big](x,y)\\
&=\int_{ \{ \omega:N_t(\mathbb{X}(\omega))<\infty\}}i^{N_t(\mathbb{X}(\omega))} \exp\big(\IAA_t(v,\theta|\mathbb{X}(\omega))\big) m(\omega(t))^{-1}\\
&\quad\times\int_{\{\omega\rq{}:\>\omega'(t)=\omega(t)\}\cap\{\omega\rq{}:N_t(\omega'(t))<\infty\}}\overline{i^{N_t(\mathbb{X}(\omega\rq{}))} \exp\big(\IAA_t(v',\theta'|\mathbb{X}(\omega\rq{}))\big)}d\IP^y(\omega\rq{})\>\> d\IP^x(\omega).
\end{align*}

\begin{proof}[Proof of Proposition \ref{scatt}] Using the composition formula (\ref{compo}) and setting  
$$
h(z):=\exp(itL_{v',\theta'})(z,y)=m(z)^{-1}\mathbb{E}_y\left[1_{\{\mathbb{X}'_t=z\}\cap\{N_t(\mathbb{X})<\infty\}}\overline{i^{N_t(\mathbb{X}')} \exp(\IAA_t(v',\theta'|\mathbb{X}'))}\right].
$$
for fixed $y$, we have $h\in \ell^2(X,m)$ by (\ref{l3}) and 
\begin{align*}
\big[\exp(-itL_{v,\theta})\exp(itL_{v',\theta'}) \big](x,y)
&=\exp(-itL_{v,\theta})h(x)\\
&=\mathbb{E}_x\left[1_{\{N_t(\mathbb{X})<\infty\}}i^{N_t(\mathbb{X})} \exp(\IAA_t(v,\theta|\mathbb{X}))h(\mathbb{X}_t)\right]\\
&=\mathbb{E}_x\left[1_{\{  N_t(\mathbb{X})<\infty\}}i^{N_t(\mathbb{X})} \exp(\IAA_t(v,\theta|\mathbb{X}))m(\IX_t)^{-1}\right.\\
&\quad\times\left.\mathbb{E}_y\left[1_{\{\mathbb{X}'_t=\mathbb{X}_t\}\cap\{N_t(\mathbb{X}')<\infty\}}\overline{i^{N_t(\mathbb{X}')} \exp\big(\IAA_t(v',\theta'|\mathbb{X}')\big)}\right]\right],
\end{align*}
completing the proof.

\end{proof}


\subsection{Proof of the Feynman path integral formula for finite subgraphs}

Let $\theta$ be a magnetic potential and $v$ be an electric potential on $X$.

\begin{prop}\label{p:finite} Let $W\subseteq X$ be finite. Then for all $f\in \ell^{2}(W,m)$, $x\in W$, $t\ge0$, one has
\begin{align*}
 \exp(-itL^{(W)}_{v,\theta})f(x)=\mathbb{E}_x\left[1_{\{t<\tau_W\}}i^{N_t(\mathbb{X})}   \exp(\IAA_t(v,\theta|\mathbb{X}))f(\mathbb{X}_t)\right].
\end{align*}
\end{prop}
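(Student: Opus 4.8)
The plan is to exploit the finiteness of $W$: there $L^{(W)}_{v,\theta}$ is a bounded operator on the finite-dimensional space $\ell^2(W,m)$, so $\exp(-itL^{(W)}_{v,\theta})$ is a genuine matrix exponential and $t\mapsto\exp(-itL^{(W)}_{v,\theta})f$ is the unique solution of a linear ODE. Denoting by $u(t,x)$ the right-hand side of the asserted identity, I would show that $u$ and $\exp(-itL^{(W)}_{v,\theta})f$ solve one and the same Volterra (Duhamel) integral equation, and then invoke uniqueness. First I would record the bookkeeping facts that make this rigorous: on $\{t<\tau_W\}$ the path stays in $W$, so $|i^{N_t(\mathbb{X})}\exp(\IAA_t(v,\theta|\mathbb{X}))|=\exp(\int_0^t\mathrm{deg}(\mathbb{X}_s)ds)\le \mathrm{e}^{t\max_{z\in W}\mathrm{deg}(z)}$; hence all expectations are finite and Fubini and dominated convergence apply freely (this is exactly where finiteness of $W$ is used). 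I would also note that $u(0,\cdot)=f$ on $W$ and that $u(r,y)=0$ for $y\notin W$ and $r>0$, since then $\tau_W=0$ under $\mathbb{P}_y$.

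The core step is a \emph{first-jump decomposition}. Under $\mathbb{P}_x$ with $x\in W$, I would condition on the pair $(\tau_1,\mathbb{X}_{\tau_1})$, using that the holding time at $x$ is exponential of rate $\mathrm{deg}(x)$ (by \eqref{inti3}) and that, independently, the process jumps to $y$ with probability $b(x,y)/(m(x)\mathrm{deg}(x))$ (consistent with \eqref{inti4}, \eqref{inti5}). Additivity of the three integrals defining $\IAA_t$ and of $N_t$, together with the strong Markov property at $\tau_1$, gives a multiplicative cocycle identity for the weight $i^{N_t(\mathbb{X})}\exp(\IAA_t(v,\theta|\mathbb{X}))$, splitting it into its $[0,\tau_1]$-contribution and an independent copy restarted at $\mathbb{X}_{\tau_1}$. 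On the no-jump event the contribution is $\mathrm{e}^{-t\,\mathrm{deg}(x)}\cdot\exp(-it(v(x)+\mathrm{deg}(x))+t\,\mathrm{deg}(x))f(x)=\mathrm{e}^{-it(v(x)+\mathrm{deg}(x))}f(x)$; here the exponential holding-time weight $\mathrm{e}^{-t\,\mathrm{deg}(x)}$ is cancelled \emph{exactly} by the term $+\int_0^t\mathrm{deg}(\mathbb{X}_s)ds$ in $\IAA_t$, which is the structural reason that term is present. The same cancellation on the jump events yields the renewal equation
\[
u(t,x)=\mathrm{e}^{-it(v(x)+\mathrm{deg}(x))}f(x)+i\sum_{y\in W}\frac{b(x,y)}{m(x)}\,\mathrm{e}^{i\theta(x,y)}\int_0^t \mathrm{e}^{-is(v(x)+\mathrm{deg}(x))}\,u(t-s,y)\,ds .
\]

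To close the argument I would compute $L^{(W)}_{v,\theta}$ from Green's formula: for $x\in W$ and $g\in\ell^2(W,m)$,
\[
L^{(W)}_{v,\theta}g(x)=(v(x)+\mathrm{deg}(x))g(x)-\sum_{y\in W}\frac{b(x,y)}{m(x)}\,\mathrm{e}^{i\theta(x,y)}g(y),
\]
using $\frac{1}{m(x)}\sum_{y\in X}b(x,y)=\mathrm{deg}(x)$ and $g|_{X\setminus W}=0$. Splitting $L^{(W)}_{v,\theta}=D-K$ into the diagonal multiplication $D$ by $v+\mathrm{deg}$ and the off-diagonal part $K$, the elementary Duhamel identity $\exp(-itL^{(W)}_{v,\theta})=\mathrm{e}^{-itD}+\int_0^t \mathrm{e}^{-i(t-s)D}(iK)\exp(-isL^{(W)}_{v,\theta})\,ds$, applied to $f$, gives precisely the displayed renewal equation for $P(t)f:=\exp(-itL^{(W)}_{v,\theta})f$. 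Since this is a linear Volterra equation with bounded kernel over the finite set $W$, a Gronwall/contraction estimate on each $[0,T]$ forces uniqueness of continuous solutions, whence $u(t,\cdot)=P(t)f$, which is the claim.

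The main obstacle I anticipate is the rigorous first-jump decomposition: pinning down the joint law of $(\tau_1,\mathbb{X}_{\tau_1})$ for the possibly non-locally finite, reversible process $\mathscr{M}$, and verifying the multiplicative strong-Markov cocycle identity for $i^{N_t(\mathbb{X})}\exp(\IAA_t(v,\theta|\mathbb{X}))$ together with the measurability and integrability needed to interchange $\mathbb{E}_x$ with the time integral. I would handle this by leaning on Lemma~\ref{error} and the boundedness of the weight on $\{t<\tau_W\}$. One could alternatively differentiate the renewal equation to obtain $\partial_t u=-iL^{(W)}_{v,\theta}u$ with $u(0,\cdot)=f$ and conclude by ODE uniqueness, but that would require separately establishing the $C^1$-regularity of $u$ in $t$, which the integral-equation route avoids.
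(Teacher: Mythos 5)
Your proposal is correct, but it follows a genuinely different route from the paper. The paper's proof is semigroup-theoretic: Lemma~\ref{l:semigroup} shows that the right-hand side defines a strongly continuous semigroup $(U_t(v,\theta,W))_{t\ge 0}$ on $\ell^2(W,m)$ (strong Markov property at \emph{deterministic} times), Lemma~\ref{generator} computes its generator pointwise as $t\searrow 0$ by splitting on the events $\{N_t(\mathbb{X})=0\}$, $\{N_t(\mathbb{X})=1,\ \mathbb{X}_{\tau_1}\in W\}$ and $\{2\le N_t(\mathbb{X})<\infty\}$ (the last controlled by Lemma~\ref{error}~b)), and finite-dimensionality upgrades this pointwise limit to norm convergence, identifying $U_t=\exp(-itL^{(W)}_{v,\theta})$. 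You instead apply the strong Markov property at the \emph{stopping time} $\tau_1$, derive a Volterra renewal equation for the probabilistic side, match it against the Duhamel expansion of the matrix exponential under the splitting $L^{(W)}_{v,\theta}=D-K$, and finish with a Gronwall/iteration uniqueness argument. The trade-off is the probabilistic input: the paper only needs the infinitesimal data (\ref{inti3}), (\ref{inti5}) and the two-or-more-jumps error bound, whereas your first-jump decomposition needs the exact joint law of $(\tau_1,\mathbb{X}_{\tau_1})$ --- exponential holding time, independent of the jump target, which has law $b(x,\cdot)/(m(x)\,\mathrm{deg}(x))$ --- together with the cocycle identity for the weight. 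This is not recorded in the paper, but, as you anticipate, it is surmountable: independence follows from memorylessness via the Markov property applied on $\{\tau_1>s\}$, and the target law then follows by comparing $\mathbb{P}_x(\tau_1\le t,\,\mathbb{X}_{\tau_1}=y)$ with (\ref{inti5}); alternatively it is part of the standard minimal jump process construction underlying \cite{GKS}. What your route buys in exchange is that it bypasses semigroup/generator theory entirely (elementary integral-equation uniqueness suffices) and it makes structurally transparent why the term $+\int_0^t\mathrm{deg}(\mathbb{X}_s)\,ds$ sits inside $\IAA_t$: it exactly cancels the exponential holding-time density, a cancellation that in the paper's proof is only visible infinitesimally in the computation of Lemma~\ref{generator}. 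At bottom both proofs are ``two solutions of the same evolution equation plus uniqueness'' arguments --- the paper at the level of generators, yours at the level of integral equations --- and both use the finiteness of $W$ in the same way, namely through the uniform bound on $\mathrm{deg}$ on $W$.
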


The proof of the proposition above is based on three auxiliary lemmas.

\begin{lemma}\label{l:semigroup} Let $W \subseteq X$ be finite. Then, $(U_t(v,\theta,W))_{t\ge0}$
defined for $f\in\ell^{2}(W,m)$ by
\begin{align*}
U_t(v,\theta,W)f(x):=\mathbb{E}_x\left[1_{\{t<\tau_W\}} i^{N_t(\mathbb{X})}\exp(\IAA_t(v,\theta|\mathbb{X})) f(\mathbb{X}_t)\right],\quad x\in W,t\ge0,
\end{align*}
is a strongly continuous semigroup of bounded operators on $\ell^{2}(W,m)$.
\end{lemma}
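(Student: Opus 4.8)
Throughout write $U_t:=U_t(v,\theta,W)$ and abbreviate $\IAA_t:=\IAA_t(v,\theta|\mathbb{X})$, the data $(v,\theta)$ being fixed. The plan is to check the three defining properties separately, exploiting everywhere that $\ell^{2}(W,m)$ is finite-dimensional because $W$ is finite. First I would observe that the integrand is well-defined on the relevant event: since $W\subseteq X$ we have $\{t<\tau_W\}\subseteq\{t<\tau_X\}$, so by \eqref{inti2} one has $N_t(\mathbb{X})<\infty$ $\mathbb{P}_x$-a.s. on $\{t<\tau_W\}$, and hence $i^{N_t(\mathbb{X})}$ and $\IAA_t$ make sense there. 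For \emph{boundedness}, the key point is that the real part of the action is harmless: as $\theta,v,\mathrm{deg}$ are real-valued, the first two summands of $\IAA_t$ are purely imaginary, so
\[
\big|\,i^{N_t(\mathbb{X})}\exp(\IAA_t)\,\big| = \exp\Big(\int_0^t \mathrm{deg}(\mathbb{X}_s)\,ds\Big).
\]
On $\{t<\tau_W\}$ the path stays in the finite set $W$, whence $\mathrm{deg}(\mathbb{X}_s)\le D_W:=\max_{z\in W}\mathrm{deg}(z)<\infty$ for $s\in[0,t]$, giving $|U_tf(x)|\le \exp(tD_W)\|f\|_\infty$. Thus $U_tf$ is a function on $W$, i.e. an element of $\ell^2(W,m)$, and linearity together with finite-dimensionality yields boundedness; the crude estimate $\|U_t\|\le \exp(tD_W)\sqrt{m(W)/\min_{z\in W}m(z)}$ also provides the local uniform bound $\sup_{t\in[0,T]}\|U_t\|<\infty$ needed below.

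For the \emph{semigroup property}, the heart of the matter is a cocycle identity under the time-shift $\Theta_s$ on $\Omega$, $(\Theta_s\omega)(r):=\omega(s+r)$. For $\mathbb{P}_x$-a.e. $\omega$ there is no jump at the fixed deterministic time $s$, so the number of jumps, the action, and the exit event split cleanly:
\[
N_{s+t}(\mathbb{X})=N_s(\mathbb{X})+N_t(\mathbb{X})\circ\Theta_s,\quad \IAA_{s+t}=\IAA_s+\IAA_t\circ\Theta_s,\quad 1_{\{s+t<\tau_W\}}=1_{\{s<\tau_W\}}\big(1_{\{t<\tau_W\}}\circ\Theta_s\big),
\]
and $f(\mathbb{X}_{s+t})=f(\mathbb{X}_t)\circ\Theta_s$; each follows from $\int_0^{s+t}=\int_0^s+\int_s^{s+t}$ and from $\{t<\tau_W\}=\{\mathbb{X}_r\in W\ \forall r\in[0,t]\}$. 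Consequently the integrand defining $U_{s+t}f$ factors as $G\cdot(H\circ\Theta_s)$ with $G:=1_{\{s<\tau_W\}}i^{N_s(\mathbb{X})}\exp(\IAA_s)$ bounded and $\mathcal{F}_s$-measurable, and $H:=1_{\{t<\tau_W\}}i^{N_t(\mathbb{X})}\exp(\IAA_t)f(\mathbb{X}_t)$ bounded. Applying the Markov property of $\mathscr{M}$ at time $s$ (to real and imaginary parts) gives $\mathbb{E}_x[G\cdot(H\circ\Theta_s)]=\mathbb{E}_x[G\cdot\mathbb{E}_{\mathbb{X}_s}[H]]$; since $\mathbb{X}_s\in W$ on $\{s<\tau_W\}$ we may read $\mathbb{E}_{\mathbb{X}_s}[H]=U_tf(\mathbb{X}_s)$, and the right-hand side is precisely $U_s(U_tf)(x)$.

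For \emph{strong continuity at} $0$, I would split according to $\{N_t(\mathbb{X})=0\}$ and $\{N_t(\mathbb{X})\ge1\}$. On $\{N_t=0\}$ the path is constantly $x\in W$, so $t<\tau_W$, $i^{N_t}=1$, the line integral vanishes, and the two time integrals equal $tv(x)$ and $t\,\mathrm{deg}(x)$; combined with $\mathbb{P}_x(N_t=0)=\exp(-t\,\mathrm{deg}(x))$ from \eqref{inti3}, this contribution equals $\exp(-it(v(x)+\mathrm{deg}(x)))f(x)\to f(x)$ as $t\searrow0$. The remainder is bounded in modulus by $\exp(tD_W)\|f\|_\infty\,\mathbb{P}_x(N_t\ge1)=\exp(tD_W)\|f\|_\infty(1-\exp(-t\,\mathrm{deg}(x)))\to0$. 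Hence $U_tf(x)\to f(x)$ for each $x\in W$, which is $\ell^2(W,m)$-convergence by finite-dimensionality; together with the semigroup property and the local uniform operator bound, standard semigroup theory upgrades this to strong continuity on all of $[0,\infty)$.

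The only genuinely delicate point, and the step I expect to be the main obstacle, is making the second paragraph rigorous: justifying the additive decompositions for $\mathbb{P}_x$-almost every path (in particular the a.s. absence of a jump at the fixed time $s$) and verifying the integrability required to invoke the Markov property via conditioning. Everything else is soft, being carried by finite-dimensionality and the elementary control of the real part of $\IAA_t$ on $\{t<\tau_W\}$.
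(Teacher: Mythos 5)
Your proof is correct and takes essentially the same route as the paper, whose own proof is a two-line sketch citing precisely the three ingredients you elaborate: boundedness of the integrand on $\{t<\tau_W\}$, the Markov property of $\mathbb{X}$ for the semigroup law, and strong continuity at $t=0$ (reduced to $t=0$ via the semigroup property and the local operator bound, exactly as you do). The only cosmetic differences are that the paper suggests checking continuity at $0$ by right-continuity of paths plus dominated convergence where you use the splitting on $\{N_t(\mathbb{X})=0\}$ together with \eqref{inti3}, and that your worry about a jump occurring at the fixed time $s$ is unnecessary, since $N_{s+t}(\mathbb{X})=N_s(\mathbb{X})+N_t(\mathbb{X})\circ\Theta_s$ holds pathwise (jumps in $[0,s]$ and in $(s,s+t]$ are each counted exactly once).
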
 

\begin{proof} The asserted boundedness is trivial and the semigroup property follows from the strong Markov property of $\mathbb{X}$. By the semigroup property it is enough to check strong continuity at $t=0$, which can be easily checked using the boundedness of the integrand and the right continuity of $\mathbb{X}$.
\end{proof}

%
%


\begin{lemma}\label{generator} Let $W\subseteq X$ be finite. Then, for all $f\in \ell^2(W,m)$ and $x\in W$, one has
\begin{align*}
    \lim_{t\searrow0}\frac{U_t(v,\theta,W)f(x)-f(x)}{t}=-i L^{(W)}_{v,\theta}f(x).
\end{align*}
\end{lemma}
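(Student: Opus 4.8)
The plan is to compute the right derivative at $t=0$ of the semigroup from Lemma~\ref{l:semigroup} by splitting the defining expectation according to the number of jumps $N_t(\mathbb{X})\in\{0,1,2,\dots\}$ of the path on $[0,t]$, into the three pieces $N_t=0$, $N_t=1$ and $N_t\geq 2$. Since $W$ is finite we have $\{t<\tau_W\}\subseteq\{t<\tau_X\}=\{N_t(\mathbb{X})<\infty\}$ $\mathbb{P}_x$-a.s.\ by (\ref{inti2}), so the number of jumps is finite on the relevant event and the three pieces genuinely exhaust the integral. Throughout I will use that $v$ and $\mathrm{deg}$ are bounded on the finite set $W$, and that on $\{t<\tau_W\}$ the path stays in $W$, so that the real part $\int_0^t\mathrm{deg}(\mathbb{X}_s)\,ds$ of $\IAA_t(v,\theta|\mathbb{X})$ is bounded by $t\max_{W}\mathrm{deg}$ and hence the entire integrand is bounded uniformly for small $t$.

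First I treat the no-jump term. On $\{N_t(\mathbb{X})=0\}$ the path is constant equal to $x$, so there are no jumps, $\int_0^t\theta(d\mathbb{X}_s)=0$, and $\IAA_t(v,\theta|\mathbb{X})=-it(v(x)+\mathrm{deg}(x))+t\,\mathrm{deg}(x)$ is deterministic; moreover $\{N_t=0\}\subseteq\{t<\tau_W\}$ because $x\in W$. Using $\mathbb{P}_x(N_t(\mathbb{X})=0)=\exp(-t\,\mathrm{deg}(x))$ from (\ref{inti3}), the factors $\exp(-t\,\mathrm{deg}(x))$ and $\exp(+t\,\mathrm{deg}(x))$ cancel and this term equals exactly $\exp(-it(v(x)+\mathrm{deg}(x)))f(x)$, whose difference quotient converges to $-i(v(x)+\mathrm{deg}(x))f(x)$.

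Next is the one-jump term, which carries the main point. On $\{N_t=1,\mathbb{X}_{\tau_1}=y\}\cap\{t<\tau_W\}$ the path sits at $x$, jumps once to a neighbor $y\in W$, and then waits at $y$; here $\int_0^t\theta(d\mathbb{X}_s)=\theta(x,y)$ and $i^{N_t}=i$, while the remaining part of $\IAA_t$ is a linear combination of the holding times $\tau_1$ and $t-\tau_1$ with coefficients bounded on $W$, hence $O(t)$ uniformly. Writing $\exp(\IAA_t)=\exp(i\theta(x,y))(1+O(t))$ and invoking (\ref{inti5}), namely $\mathbb{P}_x(N_t=1,\mathbb{X}_{\tau_1}=y)/t\to b(x,y)/m(x)$, the difference quotient of this term converges to $\tfrac{i}{m(x)}\sum_{y}b(x,y)\exp(i\theta(x,y))f(y)$; the restriction $y\in W$ forced by $\{t<\tau_W\}$ is harmless since $f$ vanishes off $W$. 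I expect this to be the delicate step: the uniform-in-$y$ control of the $O(t)$ error has to be combined with the limit (\ref{inti5}), and it is finiteness of $W$ that makes the sum and all constants finite and thereby legitimizes the interchange of limit and summation.

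Finally, for the term $N_t\geq 2$ I will show it is $o(t)$. Bounding $|i^{N_t}\exp(\IAA_t)f(\mathbb{X}_t)|$ by $e^{t\max_W\mathrm{deg}}\|f\|_\infty$ on $\{t<\tau_W\}$ and using $1_{\{t<\tau_W\}}\leq 1_W(\mathbb{X}_t)$ on that event, this term is dominated by $e^{t\max_W\mathrm{deg}}\|f\|_\infty\,\mathbb{E}_x[1_{\{2\leq N_t<\infty\}}1_W(\mathbb{X}_t)]=e^{t\max_W\mathrm{deg}}\|f\|_\infty\,t\,\varphi_{t,1_W}(x)$, which is $o(t)$ by part b) of Lemma~\ref{error} applied to $1_W\in C_c(X)$. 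Adding the three contributions yields $-i(v(x)+\mathrm{deg}(x))f(x)+\tfrac{i}{m(x)}\sum_y b(x,y)\exp(i\theta(x,y))f(y)$, which is exactly $-i\widetilde{L}_{v,\theta}f(x)$ since $\tfrac{1}{m(x)}\sum_y b(x,y)f(x)=\mathrm{deg}(x)f(x)$; because $L^{(W)}_{v,\theta}$ is the restriction of $\widetilde{L}_{v,\theta}$ to $W$ via Green's formula, this equals $-iL^{(W)}_{v,\theta}f(x)$, which completes the argument.
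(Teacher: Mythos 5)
Your proof is correct and follows essentially the same route as the paper's: the identical three-way split of the expectation according to $N_t(\mathbb{X})=0$, $N_t(\mathbb{X})=1$, and $N_t(\mathbb{X})\geq 2$, with (\ref{inti3}) giving the no-jump limit $-i(v(x)+\mathrm{deg}(x))f(x)$, (\ref{inti5}) combined with the uniform $O(t)$ bound on the exponential (finiteness of $W$) giving the one-jump limit $\tfrac{i}{m(x)}\sum_{y\in W}b(x,y)\exp(i\theta(x,y))f(y)$, and Lemma~\ref{error}~b) disposing of the multi-jump remainder. If anything, your handling of the remainder is slightly more explicit than the paper's (you keep the factor $\mathrm{e}^{t\max_W \mathrm{deg}}$ visible before applying Lemma~\ref{error}~b)), but the argument is the same.
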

\begin{proof}
We fix an arbitrary $x\in W$ and compute
\begin{align*}
\lefteqn{\frac{U_t(v,\theta,W)f(x)-f(x)}{t} } \nonumber \\&=\frac{\mathbb{E}_x\left[1_{\{N_t(\mathbb{X}) = 0\}}\exp(\IAA_t(v,\theta|\mathbb{X})) f(x)\right] -f(x)}{t}
&+\frac{\mathbb{E}_x\left[1_{\{N_t(\mathbb{X}) = 1,\mathbb{X}_{\tau_{1}}\in W\}}i \exp(\IAA_t(v,\theta|\mathbb{X})) f(\mathbb{X}_t) \right]}{t} \\
&\quad + \psi_t(x)
\end{align*}
The error term $\psi_t(x)$ satisfies $|\psi_t(x)| \leq \varphi_{t,|f|}(x)$ with $\varphi_{t,|f|}$ defined in Lemma~\ref{error} b), therefore $\psi_t(x) \to 0$ as $t \searrow 0$. For the first term of the right hand side of the equality, we have, using
$$
\mathbb{E}_x\left[1_{\{N_t(\mathbb{X}) = 0\}} \right]=\mathbb{P}_x\left({\{N_t(\mathbb{X}) = 0\}} \right)=\exp(-t\mathrm{deg}(x)) ,
$$
the convergence

\begin{align*}
\lefteqn{\frac{\mathbb{E}_x\left[1_{\{N_t(\mathbb{X}) = 0\}} \exp(\IAA_t(v,\theta|\mathbb{X})) f(x)\right] -f(x)}{t}}\\
&=\frac{\mathbb{E}_x\left[1_{\{N_t(\mathbb{X}) = 0\}}\exp\big(-t i( v(x)+\mathrm{deg}(x))+t\mathrm{deg}(x)\big) f(x)\right] -f(x)}{t}\\
&\to -i( v(x)+\mathrm{deg}(x)f(x).
\end{align*}
as $t\searrow 0$. Turning to the second term of the right hand side of the equation, setting
$$
 a(y):=-i(v(y)+\mathrm{deg}(y))+\mathrm{deg}(y) ,\quad  y\in W ,
$$
we obtain

\begin{align*}
\mathbb{E}_x&\left[1_{\{N_t(\mathbb{X}) = 1,\mathbb{X}_{\tau_{1}}\in W\}}i \exp(\IAA_t(v,\theta|\mathbb{X})) f(\mathbb{X}_t)\right]\\
&=i\sum_{y \in W}\exp(i \theta(x,y))f(y) \underbrace{\mathbb{E}_x\left[1_{\{N_t(\mathbb{X}) = 1,\mathbb{X}_{\tau_1}= y\}} \exp\Big(-\tau_1a(x) - (t-\tau_1)a(y)\Big) \right]}_{=:\rho_t(x,y)}.
\end{align*}
Setting
$$
C := 2\max \{\mathrm{deg}(x)\mid x\in W\}
$$
and using $\tau_1 \leq t$ on $\{N_t(\mathbb{X}) = 1\}$, we get
\[
\exp(-tC)\mathbb{P}_x(N_t(\mathbb{X}) = 1,\mathbb{X}_{\tau_1}= y) \leq| \rho_t(x,y)|\leq \exp(tC)\mathbb{P}_x(N_t(\mathbb{X}) = 1,\mathbb{X}_{\tau_1}= y).
\]
Since by Lemma~\ref{error}~(a)~(\ref{inti5})
$$
 \mathbb{P}_x(N_t(\mathbb{X}) = 1,\mathbb{X}_{\tau_1}= y)/t\to b(x,y)/m(x) 
$$
 this shows that 
$$
\rho_t(x,y)/t\to b(x,y)/m(x)
$$
 as $t\searrow 0$.  As $W$ is finite, we conclude

$$\frac{1}{t}\mathbb{E}_x\left[ 1_{\{N_t(\mathbb{X}) = 1,\mathbb{X}_{\tau_{1}}\in W\}}i \exp(\IAA_t(v,\theta|\mathbb{X})) f(\mathbb{X}_t) \right] \longrightarrow\frac{i}{m(x)}\sum_{y \in W} b(x,y)\exp(i \theta(x,y))f(y)\quad\text{as $t\searrow0$},   $$
so,  we infer
$$
\frac{U_t(v,\theta,W)f(x)-f(x)}{t} \longrightarrow - iL^{(W)}_{v,\theta}f(x)\>\>\text{ as $t\searrow0$.}
$$
\end{proof}

With these preparations we can now prove Theorem~\ref{p:finite}.

\begin{proof}[Proof of Proposition~\ref{p:finite}] For finite $W\subseteq X$, we have $\ell^{2}(W,m)=C_{c}(W)$. In particular, $L_{v,\theta}^{(W)}$ is a finite dimensional operator and the convergence
\[
-iL_{v,\theta}^{(W)}=\lim_{t\searrow 0}\frac{1}{t}\left(U_t(v,\theta,W)-\mathrm{id}\right)
\]
from Lemma~\ref{generator} holds in the $\ell^{2}(W,m)$ sense. Therefore, the generator of the strongly continuous semigroup  $(U_t(v,\theta,W))_{t\geq 0}$ is given by $L_{v,\theta}^{(W)}$. It follows that $\exp(-itL_{v,\theta}^{(W)})=U_t(v,\theta,W)$ for all $t\ge0$.
\end{proof}

\subsection{Proof of Theorem \ref{main} in the general case}

For any subset $W\subseteq X$ we have a canonically given inclusion operator
$$
\iota_W: \ell^2(W,m)\hookrightarrow \ell^2(X,m),
$$
which comes from extending functions to zero away from $W$, and its adjoint will be denoted with $\pi_W:=\iota^*_W$. Note that $\pi_W$ is given by the restriction map $f\mapsto f|_W$. The following geometric approximation is based on the Mosco convergence of the quadratic forms and will allow us to extend the Feynman path integral from finite to arbitrary graphs:

\begin{prop} \label{approx1}
Suppose $Q^{(c)}_{v,\theta}$ is semi-bounded and closable and let $(X_n)_{n\in\IN}$ be an exhausting sequence for  $X$, that is,  $X_{n}\subseteq X_{n+1}$ for all $n$ and  $X=\bigcup_{n\in\IN}X_{n}$. Then, for all $t\geq 0$, one as
$$
\iota_{X_n} \exp(-itL_{v,\theta}^{(X_n)})\pi_{X_n} \to \exp(-itL_{v,\theta}) \text{ strongly in } \ell^2(X,m) \mbox{ as }n\to\infty.$$
\end{prop}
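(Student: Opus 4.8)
The plan is to deduce the asserted strong convergence of the compressed unitary groups from strong convergence of the compressed resolvents
$\iota_{X_n}(L^{(X_n)}_{v,\theta}+\lambda)^{-1}\pi_{X_n}\to(L_{v,\theta}+\lambda)^{-1}$ (for one, hence all admissible $\lambda$), which in turn I would obtain from \emph{Mosco convergence} of the underlying quadratic forms. Since $Q^{(c)}_{v,\theta}$ is bounded below by some $\gamma\in\IR$ and each $Q^{(X_n)}_{v,\theta}$ is by definition the closure of the restriction of $Q^{(c)}_{v,\theta}$ to the \emph{smaller} core $C_c(X_n)\subseteq C_c(X)$, the whole family is bounded below by the \emph{same} $\gamma$; thus $L^{(X_n)}_{v,\theta}\geq\gamma$ and $L_{v,\theta}\geq\gamma$, and all the resolvents exist and are uniformly bounded for $\lambda>-\gamma$. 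For such a uniformly semi-bounded family, strong resolvent convergence upgrades to strong convergence of the unitary groups by a Trotter--Kato argument, the compression by $\iota_{X_n},\pi_{X_n}$ doing no harm because $\pi_{X_n}u\to u$ in $\ell^2(X,m)$ for every $u$ as $X_n\nearrow X$.

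To realize everything inside the fixed space $\ell^2(X,m)$, I would pass to the extended forms $\widehat{Q}_n$ on $\ell^2(X,m)$ given by $\widehat{Q}_n:=Q^{(X_n)}_{v,\theta}$ on $\iota_{X_n}\ell^2(X_n,m)$ and $\widehat{Q}_n:=+\infty$ off that subspace. The self-adjoint object associated with $\widehat{Q}_n$ has resolvent exactly $\iota_{X_n}(L^{(X_n)}_{v,\theta}+\lambda)^{-1}\pi_{X_n}$, so that convergence statements about $\widehat{Q}_n$ translate directly into statements about the compressions occurring in the proposition. Because $(X_n)$ exhausts $X$, the finitely supported cores increase, $C_c(X_n)\subseteq C_c(X_{n+1})$ with $\bigcup_n C_c(X_n)=C_c(X)$, the forms agree on these common cores, and $C_c(X)$ is a form core for the limit form $Q_{v,\theta}$. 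Hence $(\widehat{Q}_n)$ is a \emph{monotone}, uniformly semi-bounded family of closable forms whose pointwise limit on the common core is the closable form $Q^{(c)}_{v,\theta}$.

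With this setup the result follows once Mosco convergence $\widehat{Q}_n\to Q_{v,\theta}$ is established, and for this I would invoke the characterization proved in the appendix, Theorem~\ref{mosco.char}, whose hypotheses are precisely the monotonicity, uniform semiboundedness and closability of the common limit recorded above. The recovery (limsup) half of Mosco convergence is the easy one: for $u$ in the form core one takes the constant recovery sequence $u_n=u$ as soon as $\supp u\subseteq X_n$, on which $\widehat{Q}_n(u)=Q^{(c)}_{v,\theta}(u)=Q_{v,\theta}(u)$, and the general case follows by a diagonal argument and density of the core.

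The hard part is the weak lower semicontinuity (liminf) half: given $u_n\to u$ weakly in $\ell^2(X,m)$ with $u_n$ supported in $X_n$ and $\liminf_n\widehat{Q}_n(u_n)<\infty$, one must show $Q_{v,\theta}(u)\leq\liminf_n\widehat{Q}_n(u_n)$. Weak $\ell^2$ convergence gives pointwise convergence $u_n(x)\to u(x)$ for each $x$ (testing against $\delta_x$), so Fatou's lemma applied edgewise controls the nonnegative \emph{kinetic} part $\tfrac12\sum_{x\sim y}b(x,y)|u(x)-\e^{i\theta(x,y)}u(y)|^2$. The genuine obstruction is the electric term $\sum_x v(x)|u_n(x)|^2 m(x)$: when $v$ is not bounded below pointwise but only renders the \emph{total} form semi-bounded, the negative part of $v$ is absorbed by the kinetic energy rather than termwise, and a naive Fatou estimate fails. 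Overcoming exactly this difficulty --- controlling a form that is semi-bounded only after the kinetic term compensates an unbounded-below potential --- is the role of Theorem~\ref{mosco.char}, which reduces the verification of Mosco convergence for such monotone families to the convergence on the common core already checked, and thereby delivers the strong resolvent convergence and, finally, the convergence of the unitary groups asserted in the proposition.
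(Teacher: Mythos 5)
There is a genuine gap, and it lies in how you use Theorem~\ref{mosco.char}. You invoke it as if it were a criterion that \emph{produces} Mosco convergence from monotonicity of your extended forms $\widehat{Q}_n$ plus agreement on the common core $C_c(X)$. It is not: its hypothesis is generalized Mosco convergence itself --- both the liminf condition (a) and the recovery condition (b) of Definition~\ref{mosco} --- and its conclusion is the strong convergence $\iota_k\psi(L_k)\pi_k\to\psi(L)$ for every $\psi\in C_b(\IR)$. (Your description of its hypotheses as ``monotonicity, uniform semiboundedness and closability of the common limit'' does not match the statement in the appendix; no monotonicity appears there, and Mosco convergence does.) Consequently the step you yourself single out as the hard part --- the lower semicontinuity inequality, where Fatou cannot be applied termwise because the negative part of $v$ is absorbed only globally by the kinetic energy --- is never proved: you delegate it to a theorem whose hypotheses contain exactly that inequality. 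The paper does not prove it either; its proof of Proposition~\ref{approx1} is two lines: by Theorem~\ref{mosco.char} it suffices to have generalized Mosco convergence $Q^{(X_n)}_{v,\theta}\to Q_{v,\theta}$, and that convergence is quoted from \cite{GKS}. To repair your argument you must either cite \cite{GKS} at this point, as the paper does, or actually prove condition (a); if you insist on a self-contained route through your non-increasing extended forms $\widehat{Q}_n$, the appropriate tool is a Kato--Simon-type monotone convergence theorem for non-increasing sequences of closed forms, not Theorem~\ref{mosco.char}.

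A second, related problem: the step you treat as routine (``strong resolvent convergence upgrades to strong convergence of the unitary groups by a Trotter--Kato argument, the compression doing no harm'') is precisely where Theorem~\ref{mosco.char} is genuinely needed. On a fixed Hilbert space the upgrade is classical (Theorem VIII.20 of \cite{RS1}). Here, however, the approximating operators live on the varying subspaces $\iota_{X_n}\ell^2(X_n,m)$ and enter only through the compressions $\iota_{X_n}\cdot\pi_{X_n}$; the known equivalence in this setting (Theorem 3.8 of \cite{CKK}) yields convergence of the \emph{semigroups}, hence of $\iota_k\psi(L_k)\pi_k$ for $\psi\in C_\infty(\IR)$ after Stone--Weierstrass, and the remark following Theorem~\ref{mosco.char} stresses that the extension to $\psi\in C_b(\IR)$ --- in particular to $\psi(x)=\exp(-itx)$ --- is exactly the new content of the appendix, whose proof needs the cutoffs $g_l$, the identity $\pi_k\iota_k=\mathrm{id}$ and the uniform bound on $\|\pi_k\|$, not merely $\iota_{X_n}\pi_{X_n}u\to u$. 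The economical architecture is the paper's: take the Mosco convergence from \cite{GKS} as input and apply Theorem~\ref{mosco.char} with $\psi(x)=\exp(-itx)$; then no resolvent detour and no Trotter--Kato argument is needed at all.
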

\begin{proof} By Theorem~\ref{mosco.char} it suffices to show that the forms $Q^{(X_n)}_{v,\theta}$ converge to $Q_{v,\theta}$ as $n\to\infty$ in the generalized Mosco sense. But this has been shown in \cite{GKS}.
 \end{proof}

\begin{proof}[Proof of Theorem~\ref{main}] Let $f\in\ell^2(X,m)$, $x\in X$, $t>0$. We are going to prove
$$
\exp(-itL_{v,\theta})f (x) = \mathbb{E}_x\left[1_{\{t<\tau, N_t(\mathbb{X})<\infty\}} i^{N_t(\mathbb{X})}\exp(\IAA_t(v,\theta|\mathbb{X})) f(\mathbb{X}_t)\right].
$$
In view of (\ref{inti2}), this clearly implies
\begin{align*}
&\mathbb{E}_x\left[1_{\{t<\tau, N_t(\mathbb{X})<\infty\}} i^{N_t(\mathbb{X})}\exp(\IAA_t(v,\theta|\mathbb{X})) f(\mathbb{X}_t)\right]\\
&=\sum_{y\in X} \mathbb{E}_x\left[1_{\{\IX_t=y, N_t(\mathbb{X})<\infty\}} i^{N_t(\mathbb{X})}\exp(\IAA_t(v,\theta|\mathbb{X})) f(\mathbb{X}_t)\right],
\end{align*}
proving the asserted formula. Let $(X_n)$ be an exhausting sequence for $X$. Then, Proposition~\ref{approx1} implies the pointwise convergence
$$
\exp(-itL_{v,\theta})f (x) = \lim_{n \to \infty} \iota_{X_n} \exp(-itL^{(X_n)}_{v,\theta})\pi_{X_n} f (x).
$$
Combining this with Proposition~\ref{p:finite}, it remains to prove the equation
\begin{align*}
&\lim_{n\to \infty} \mathbb{E}_x\left[1_{\{t<\tau_{X_n}, N_t(\mathbb{X})<\infty\}} i^{N_t(\mathbb{X})}\exp(\IAA_t(v,\theta|\mathbb{X}) )\pi_{X_n} f(\mathbb{X}_t)\right] \\
&= \mathbb{E}_x\left[1_{\{t<\tau, N_t(\mathbb{X})<\infty\}} i^{N_t(\mathbb{X})}\exp(\IAA_t(v,\theta|\mathbb{X})) f(\mathbb{X}_t)\right]. 
\end{align*}
This however follows from dominated convergence, as we have
$$
\left|1_{\{t<\tau_{X_n}, N_t(\mathbb{X})<\infty\}} i^{N_t(\mathbb{X})}\exp(\IAA_t(v,\theta|\mathbb{X}))\pi_{X_n} f(\mathbb{X}_t)\right|\leq 1_{\{t<\tau_{X }, N_t(\mathbb{X})<\infty\}} \exp(\int^t_0 \mathrm{deg}(\mathbb{X}_s) ds) | f(\mathbb{X}_t)|
$$
and 
$$
\mathbb{E}_x\left[1_{\{t<\tau_{X}\}} \exp\left(\int^t_0 \mathrm{deg}(\mathbb{X}_s) ds\right)| f(\mathbb{X}_t)|\right]
=\exp(-t L_{-\mathrm{deg},0}   )|f|(x)<\infty
$$
by the usual Feynman-Kac formula \cite{GKS}.
\end{proof}

\appendix

\section{Mosco-convergence}

Let $\IHH_k $, $k \in\IN$, and $ \IHH $ be Hilbert spaces. Suppose $q_k$ and $q$ are densely defined closed symmetric sesquilinear forms on $\IHH_k$ and $\IHH$, respectively, which are bounded below by a constant $c> -\infty$ which is \emph{uniform} in $k$. Each $q_k$ is understood to be defined on the whole space $\IHH_k$ by the convention $q_k(u) = \infty$ whenever $u \in \IHH_k \setminus \mathrm{Dom}(q_k)$. Furthermore, we suppose that there exist bounded operators $\iota_k:\IHH_k \to \IHH$ such that $\pi_k := \iota_k ^*$ is a left inverse of $\iota_k$, that is
$$\as{\pi_kf, f_k}  = \as{f,\iota_kf_k}\text{ and } \pi_k \iota_kf_k = f_k, \text{ for all } f\in \IHH, f_k\in \IHH_k.$$
Moreover, we assume that $\pi_k$ satisfies
$$\sup_{k\in\IN}\|\pi_k\|< \infty \text{ and } \lim_{k\to \infty} \|\pi_kf\|  = \|f\|.$$

\begin{definition} \label{mosco}
In the above situation, we say that $q_k$ is \emph{Mosco convergent} to $q$ as $k\to\infty$ \emph{in the generalized sense}, if the following conditions hold:
\begin{itemize}
\item[(a)] If $u_k \in \IHH_k$, $u \in \IHH$ and $\iota_ku_k \to u$ weakly in $\IHH$, then
$$\liminf_{k \to \infty}\left(q_k(u_k) + c\|u_k\|_k^2\right) \geq q(u) + c\|u\|^2.$$
\item[(b)] For every $u \in \IHH$ there exist $u_k \in \IHH_k$, such that $\iota_k u_k \to u$ in $\IHH$ and
$$\limsup_{k \to \infty}\left(q_k(u_k) + c \|u_k\|^2\right) \leq q(u) + c \|u\|^2.$$
\end{itemize}
\end{definition}

We  denote by $L_k$ the self-adjoint operator corresponding to $q_k$ and let $L$ be the self-adjoint operator corresponding to $q$ which are both bounded from below by $ c $.  We will need the following generalization of the characterization of Mosco convergence from \cite{CKK} (see also the appendix of \cite{GKS}). Given an interval $I\subset \IR$, we denote by 
$ C_{b}(I) $ the bounded continuous functions on $ I $ and by $ C_{\infty}(I) $ the space of continuous functions that become arbitrarily small outside of every compact set of $I$.

\begin{thm} \label{mosco.char}
If $q_k$ is Mosco convergent to $q$ as $k\to\infty$ in the generalized sense, then one has $\iota_k \psi(L_k)\pi_k \to \psi(L)$ as $k\to\infty$ strongly for every $\psi\in C_b(\IR)$.

\end{thm}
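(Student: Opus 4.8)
The plan is to reduce the statement to the classical equivalence between generalized Mosco convergence and generalized strong resolvent convergence, and then to upgrade the functional calculus from $C_\infty(\mathbb{R})$ to all of $C_b(\mathbb{R})$ by a tightness argument for the relevant spectral measures. After the shift $L_k\mapsto L_k-c$, $L\mapsto L-c$ (which turns $\psi$ into $s\mapsto\psi(s+c)$, again in $C_b$), I may assume $c=0$, so that all operators are nonnegative and only the restriction of $\psi$ to $[0,\infty)$ matters. The structural facts that organise everything in the varying-space setting are $\pi_k\iota_k=\mathrm{id}$ on $\IHH_k$ and $\iota_k^*=\pi_k$.

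First I would invoke the characterization from \cite{CKK} (see also the appendix of \cite{GKS}): generalized Mosco convergence of $q_k$ to $q$ implies $\iota_k(L_k+\lambda)^{-1}\pi_k\to(L+\lambda)^{-1}$ strongly for $\lambda>0$. Writing $T_k^\psi:=\iota_k\psi(L_k)\pi_k$, the identity $\pi_k\iota_k=\mathrm{id}$ makes $\psi\mapsto T_k^\psi$ multiplicative, $T_k^{\psi\phi}=T_k^\psi T_k^\phi$, and uniformly bounded, $\|T_k^\psi\|\le\sup_k\|\pi_k\|^2\,\|\psi\|_\infty$. Hence the set $S:=\{\psi\in C_b:\ T_k^\psi f\to\psi(L)f\text{ for all }f\}$ is a sup-norm closed subalgebra of $C_b$. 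It contains all resolvents $s\mapsto(s+\lambda)^{-1}$, and it contains the constants because $\iota_k\pi_k f\to f$ strongly: indeed $\|\iota_k\pi_k f\|^2=\|\pi_k f\|^2\to\|f\|^2$, while $\langle\iota_k\pi_k f,h\rangle=\langle\pi_k f,\pi_k h\rangle\to\langle f,h\rangle$ by polarization from $\|\pi_k\cdot\|\to\|\cdot\|$. By Stone--Weierstrass on the one-point compactification $[0,\infty]$, $S$ therefore contains every $\psi\in C([0,\infty])$, i.e. every bounded continuous $\psi$ admitting a limit at $+\infty$; in particular $S\supseteq C_\infty(\mathbb{R})$.

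The genuinely new point, and the main obstacle, is passing from functions with a limit at infinity to arbitrary $\psi\in C_b$, where the algebraic argument is useless and one must rule out escape of spectral mass to infinity. Here I would argue through the spectral measures. Fix $f\in\IHH$ and let $\mu_k$ be the spectral measure of $L_k$ associated with $\pi_k f$ and $\mu$ that of $L$ associated with $f$; these are positive measures on $[0,\infty)$ with total masses $\|\pi_k f\|^2\to\|f\|^2=\mu([0,\infty))$. For $\phi\in C_\infty$ the previous step gives $\int\phi\,d\mu_k=\langle\iota_k\phi(L_k)\pi_k f,f\rangle\to\langle\phi(L)f,f\rangle=\int\phi\,d\mu$, i.e. $\mu_k\to\mu$ vaguely; since the total masses also converge, $\mu_k\to\mu$ weakly and $\{\mu_k\}$ is tight. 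This tightness is exactly what compensates for $\psi$ failing to vanish at infinity.

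Finally, for arbitrary $\psi\in C_b$ and fixed $f$ I would obtain strong convergence $T_k^\psi f\to\psi(L)f$ from weak convergence together with convergence of norms. The identity $\pi_k\iota_k=\mathrm{id}$ gives $\|T_k^\psi f\|^2=\|\psi(L_k)\pi_k f\|^2=\int|\psi|^2\,d\mu_k\to\int|\psi|^2\,d\mu=\|\psi(L)f\|^2$, since $|\psi|^2\in C_b$ and $\mu_k\to\mu$ weakly. For weak convergence, for each $h\in\IHH$ write $\langle T_k^\psi f,h\rangle=\int\psi\,d\nu_k$ with $\nu_k$ the complex spectral measure of $L_k$ associated with the pair $(\pi_k f,\pi_k h)$; polarizing $\nu_k$ into the four positive, tight, weakly convergent measures associated with the vectors $\pi_k(f+i^j h)$ shows $\int\psi\,d\nu_k\to\langle\psi(L)f,h\rangle$, so $T_k^\psi f\rightharpoonup\psi(L)f$. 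Weak convergence plus convergence of norms yields $T_k^\psi f\to\psi(L)f$ in $\IHH$, which is the assertion. The only delicate ingredient is the tightness in the third paragraph; everything else is bookkeeping governed by $\pi_k\iota_k=\mathrm{id}$ and $\iota_k^*=\pi_k$.
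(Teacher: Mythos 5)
Your proof is correct, and it reaches the conclusion by a genuinely different route than the paper, most notably in the crucial passage from $C_{\infty}$ to $C_b$. Both arguments start from a known characterization of generalized Mosco convergence, but the paper uses the semigroup form ($\iota_k \exp(-aL_k)\pi_k \to \exp(-aL)$ strongly, locally uniformly in $a\geq 0$, citing Theorem 3.8 of \cite{CKK}), whereas you use the resolvent form; the latter is available in the same circle of references (Kuwae--Shioya and \cite{CKK}), so the substitution is legitimate, though you should cite the resolvent statement precisely. For the $C_{\infty}$ step both proofs are Stone--Weierstrass arguments, but yours is organized around the observation that $\pi_k\iota_k=\mathrm{id}_{\IHH_k}$ makes $\psi\mapsto T^{\psi}_k:=\iota_k\psi(L_k)\pi_k$ multiplicative, so that the set $S$ of good functions is a closed subalgebra containing the resolvents and the constants --- a clean structural point the paper does not exploit (it works with the linear span of exponentials and a three-epsilon estimate). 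The essential divergence is in the upgrade to $C_b$: the paper cuts off at the operator level, using $g_l(x)=\mathrm{e}^{-x^2/l}\in C_{\infty}$ with $g_l\nearrow 1$, hence $g_l(L)\to \mathrm{id}$ strongly, and then a telescoping estimate invoking the $C_{\infty}$ case for $\psi g_l$ and for $g_l$ together with $\pi_k\iota_k=\mathrm{id}_{\IHH_k}$; you instead pass to spectral measures, where vague convergence (from the $C_{\infty}$ case) combined with convergence of total masses $\|\pi_k f\|^2\to\|f\|^2$ (a standing assumption on $\pi_k$) yields weak convergence and tightness, after which polarization gives weak operator convergence and the isometry of $\iota_k$ (a consequence of $\pi_k\iota_k=\mathrm{id}_{\IHH_k}$, $\iota_k^*=\pi_k$) gives convergence of norms, so that strong convergence follows. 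Both mechanisms encode the same phenomenon --- no spectral mass escapes to infinity --- but they package it differently: the paper's cutoff argument stays entirely at the level of the functional calculus and is self-contained given its Step 1, while your argument isolates the tightness of the spectral measures $\mu_k$ as the key fact, which is conceptually transparent and yields the weak convergence of spectral measures as a byproduct of independent interest. All the auxiliary verifications in your sketch (multiplicativity of $T_k^{\psi}$, uniform boundedness, $\iota_k\pi_k f\to f$ strongly via polarization, the polarization of the complex spectral measures $\nu_k$ into four positive tight measures, and weak convergence plus norm convergence implying strong convergence) are sound.
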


\begin{remark} As the proof below shows, the fact that one can take $C_{\infty}(\IR)$ in the above statement is a rather simple consequence of known results, the Stone-Weierstrass Theorem and the spectral calculus. The point of Theorem \ref{mosco.char} is that one can even take $C_b(\IR)$, which plays an essential role in this paper, for we are interested in operators of the form $\exp(-itL)$.
\end{remark}

\begin{proof} It is well-known that Mosco convergence is equivalent to 
\begin{align}\label{popo}
\iota_k \exp(-aL_k)\pi_k \to \exp(-aL)\quad\text{ as $k\to\infty$ },
\end{align}
strongly and locally uniformly in $a\geq 0$ (cf. Theorem 3.8 in \cite{CKK} and the appendix of \cite{GKS}). We are going to prove that the latter semigroup convergence implies $\iota_k \psi(L_k)\pi_k \to \psi(L)$ as $k\to\infty$ strongly for every $\psi\in C_b([c,\infty))$, proving the claim, as the spectra of $L$ and $L_k$ are subsets of $[c,\infty)$. To this end, we are going to follow the proof of Theorem VIII.20 in \cite{RS1} (which treats the case $\IHH_k=\IHH$ and $\pi_k=\mathrm{id}_{\IHH}$). \vspace{1mm}

Step 1: The claim holds for all $\psi\in C_{\infty}([c,\infty))$.\\
Proof: Let us denote with $\IAA$ the space of complex linear combinations of functions of the form $x\mapsto \exp(-a x)$ on $[c,\infty)$, where $a,b\geq 0$. Then $\IAA$ is a separating unital *-subalgebra of $C_{\infty}([c,\infty))$, thus dense in $C_{\infty}([c,\infty))$ by Stone-Weierstrass. Given an arbitrary $\varepsilon >0$, and $\psi\in C_{\infty}([c,\infty))$ we thus find $\psi_{\varepsilon }\in \IAA$ with $\left\|\psi-\psi_{\varepsilon}\right\|_{\infty}<\varepsilon $. By the spectral calculus we have
\begin{align}\label{kla}
\left\|\psi(L_k)-\psi_{\varepsilon}(L_k)\right\| <\varepsilon ,\quad \left\|\psi(L )-\psi_{\varepsilon}(L )\right\|<\varepsilon .
\end{align}
for the operator norms. Let now $f\in \IHH$. Then we can estimate as follows,
\begin{align*}
&\left\| \big(\iota_k \psi(L_k)\pi_k - \psi(L)\big)f\right\|\\
&=\left\| \Big(\iota_k \psi(L_k)\pi_k - \psi(L)   + \iota_k \psi_{\varepsilon}(L_k)\pi_k-\iota_k\psi_{\varepsilon}(L_k)\pi_k +\psi_{\varepsilon}(L)- \psi_{\varepsilon}(L)\Big)f\right\|\\
&\leq \left\| \iota_k \psi(L_k)\pi_k f-\iota_k \psi_{\varepsilon}(L_k)\pi_kf \right\|+\left\|  \psi_{\varepsilon}(L)f- \psi(L)f  \right\|+\left\|     \iota_k \psi_{\varepsilon}(L_k)\pi_kf- \psi_{\varepsilon}(L)f\right\|.
\end{align*}
The last summand is $<\varepsilon$ for large $k$ by (\ref{popo}) (which clearly extends from exponentials to $\IAA$), the second summand is $<\varepsilon\left\|f\right\|$ for all $k$ by (\ref{kla}), and the first summand is 
$$
< \sup_k\left\|\pi_k\right\|^2 \left\|f\right\|\varepsilon
$$
for all $k$ by (\ref{kla}), completing the proof of Step 1.\vspace{1mm}

Step 2: The claim holds for all $\psi\in C_{b}([c,\infty))$.\\
Proof: Fix $f\in \IHH$ and $\varepsilon>0$. For every $l\in \IN$ set $g_l(x):=\mathrm{e}^{-x^2/l}$, a function in $C_{\infty}([c,\infty))$. Since $g_l(x)\to 1$ from below, the spectral calculus implies $g_l(B)\to \mathrm{id}$ strongly as $l\to\infty$ for every self-adjoint operator $B$. We can thus fix an $l$ such that  
$$
\left\|f-g_l(L)f\right\|<\varepsilon.
$$
Furthermore, let us set
$$
C_1:=\max\Big(\left\|\psi(L)\right\|,\sup_k\left\|\psi(L_k)\right\|\Big)\leq \left\|\psi\right\|_{\infty},\quad C_2:=\sup_k \left\|\pi_k\right\|=\sup_k \left\|\iota_k\right\|.
$$
Then for large $k$ we can estimate as follows: 
\begin{align*}
\lefteqn{\left\|\iota_k\psi(L_k)\pi_kf-\psi(L)f\right\|}\\
&\leq \left\|\psi(L) g_l(L)f -\psi(L)f\right\|+\left\|\iota_k\psi(L_k)g_l(L_k)\pi_kf- \psi(L) g_l(L)f\right\|\\
&\quad\>\>+\left\| \iota_k\psi(L_k)\pi_kf-\iota_k\psi(L_k)g_l(L_k)\pi_kf\right\|\\
&\leq C_1\varepsilon+\varepsilon+C_{1}C_2\left\| \pi_kf-g_l(L_k)\pi_kf\right\|\\
& \leq C_1\varepsilon+\varepsilon+C_{1}C_2\left\| \pi_kf-\pi_kg_l(L)f\right\|+C_{1}C_{2}\left\|g_l(L_k)\pi_kf+\pi_kg_l(L)f\right\|\\
&\leq  C_1\varepsilon+\varepsilon+C_{1}C_2^2\left\| f-g_l(L)f\right\|+C_{1}C_2\left\|\pi_k\iota_kg_l(L_k)\pi_kf -\pi_kg_l(L)f\right\|\\
&\leq  C_1\varepsilon+\varepsilon+C_{1}C_2^2\varepsilon+C_{1}C_2^2\left\|\iota_kg_l(L_k)\pi_kf -g_l(L)f\right\|\\
&\leq  C_1\varepsilon+\varepsilon+C_{1}C_2^2\varepsilon+C_{1}C_2^2\varepsilon,
\end{align*}
where we have used $\psi g_l\in C_{\infty}([c,\infty))$ and step 1 for the second step, $\pi_k\iota_k=\mathrm{id}_{\IHH_k}$ for the fifth step, and $  g_l\in C_{\infty}([c,\infty))$ and Step 1 for the last step. This completes the proof.

\end{proof}

{\bf Acknowledgements:} The authors would like to thank Burkhard Eden, Evgeny Korotyaev, Ognjen Milatovic and Matthias Staudacher for very helpful discussions.

\end{document}